% {{{ Setup
% \documentclass[dvips,aoas,preprint]{imsart}
% \documentclass[a4paper,11pt]{imsart}
% \documentclass[dvips,aop,preprint]{imsart}
% \documentclass[aop,12pt]{imsart}
\documentclass[a4paper,11pt]{article}
\usepackage{amsmath,amsthm,amsfonts,amssymb,color,enumerate,xfrac,tikz,comment,pgfplots, mathtools}
\usepackage[colorlinks,citecolor=blue,urlcolor=blue]{hyperref}
\usetikzlibrary{shapes.geometric,positioning,matrix}
\excludecomment{codes}
% -------------------------------------------------
% Set up the page margins.
% -------------------------------------------------
\usepackage[text={6.5in,9.0in},centering]{geometry}
% \setlength{\topmargin}{-0.25in}

% -------------------------------------------------
% Load Packages here
% -------------------------------------------------
\usepackage{graphicx,url}
\RequirePackage[OT1]{fontenc}
\usepackage{datetime}
\usepackage{subcaption}
\usepackage[normalem]{ulem} % either use this (simple) or
\usepackage{soul} % use this (many fancier options)
\makeatother
\numberwithin{equation}{section}
\allowdisplaybreaks

% -------------------------------------------------
% check references
% -------------------------------------------------
% \usepackage{refcheck}
% \usepackage[notref,notcite]{showkeys}
% \usepackage[notcite]{showkeys}
% \usepackage{showkeys}

% -------------------------------------------------
% cmark and xmark
% -------------------------------------------------

\usepackage{amssymb}% http://ctan.org/pkg/amssymb
\usepackage{pifont}% http://ctan.org/pkg/pifont
%
%

% -------------------------------------------------
% Environments here
% -------------------------------------------------
\theoremstyle{plain}
\newtheorem{theorem}{Theorem}[section]

\newtheorem{lemma}[theorem]{Lemma}

\theoremstyle{definition}
\newtheorem{definition}[theorem]{Definition}

\newtheorem{assumption}[theorem]{Assumption}

\newtheorem{remark}[theorem]{Remark}

\newtheorem{example}[theorem]{Example}

% -------------------------------------------------
%  Define short hand symbols.
% -------------------------------------------------
\newcommand{\E}{\mathbb{E}}

\newcommand{\ud}{\ensuremath{\mathrm{d} }}

\newcommand{\Norm}[1]{\left\|#1\right\|}

\newcommand{\Pro}{\mathbb{P}}

\newcommand{\R}{\mathbb{R}}

% -------------------------------------------------
% Bibtex related
% -------------------------------------------------
% {{{
\usepackage[utf8]{inputenc}
\usepackage[strict=true,style=english]{csquotes}
% \usepackage[style=alphabetic, %authoryear bibstyle=authoryear,
%             citestyle=authoryear,
%             natbib=true,
%             hyperref=true,
%             backref=true,
%             abbreviate=true]{biblatex}
\usepackage[backend=biber,
            style=alphabetic,
            % style=numeric,
            natbib=true,
            maxbibnames=99,
            sorting=nyt,
            abbreviate=true]{biblatex}
% Options for sorting:
% nty—sorts entries by name, title, year;
% nyt—sorts entries by name, year, title;
% nyvt—sorts entries by name, year, volume, title;
% anyt—sorts entries by alphabetic label, name, year, title;
% anyvt—sorts entries by alphabetic label, name, year, volume, title;
% ynt—sorts entries by year, name, title;
% ydnt—sorts entries by year (descending order), name, title;
% none—no sorting. Entries appear in the order they appear in the text.
\AtEveryBibitem{% Clean up the bibtex rather than editing it
 % \clearfield{doi}
 \clearfield{url}
 \clearfield{issn}
 \clearlist{location}
 \clearfield{month}
 \clearfield{series}

 \ifentrytype{book}{}{% Remove publisher and editor except for books
  \clearlist{publisher}
  \clearname{editor}
 }
}
\addbibresource{./All.bib}
% \addbibresource{./main_biber.bib}
% \addbibresource{./Super_Linear_biber_R1_biber.bib}
% \addbibresource{./Super_Linear_R1_biber.bib}
% }}}
% }}}

%------------------------------------------------------
\title{
  Global solution for superlinear stochastic heat equation on $\R^d$\\
  under Osgood-type conditions
}
\author{
 Le Chen\footnote{Department of Mathematics and Statistics, Auburn University, Auburn, Alabama, USA. Email: \url{le.chen@auburn.edu}.}
 \and
 Mohammud Foondun\footnote{Department of Mathematics and Statistics, University of Strathclyde,
 Glasgow, UK. Email: \url{mohammud.foondun@strath.ac.uk}.}
 \and
 Jingyu Huang\footnote{School of Mathematics, University of Birmingham, Birmingham, UK. Email: \url{j.huang.4@bham.ac.uk}.}
 \and
 Mickey Salins\footnote{Department of Mathematics and Statistics, Boston University, Boston, Massachusetts, USA. Email: \url{msalins@bu.edu}.}
}
% \date{23 May 2023}
\date{\today}

\begin{document}
\maketitle

\begin{abstract}
  We study the \textit{stochastic heat equation} (SHE) on $\R^d$ subject to a
  centered Gaussian noise that is white in time and colored in space.The drift term is assumed to satisfy an Osgood-type
  condition and the diffusion coefficient may have certain related growth. We show that there exists random field solution which do not explode in finite time. This complements and improves upon recent results on blow-up of solutions to stochastic partial differential equations.

  \bigskip \noindent{\textit{Keywords.}} Global solution; Stochastic heat
  equation; Reaction-diffusion; Dalang's condition; superlinear growth;
  Osgood-type conditions.
\end{abstract}

\setlength{\parindent}{1.5em}

% Change link color to black just before the TOC
\hypersetup{linkcolor=black}
\tableofcontents
% Reset link color to default (red) after the TOC
\hypersetup{linkcolor=red}

\section{Introduction}

In this paper, we study the following superlinear \textit{stochastic heat
equation} (SHE) on $\R^d$:
\begin{equation}\label{E:SHE}
  \begin{dcases}
    \dfrac{\partial u(t,x)}{\partial t} = \dfrac{1}{2}\Delta u(t,x) + b\left(u(t,x)\right) + \sigma\left(u(t,x)\right)\dot{W}(t,x)\,, & t>0, \: x\in\R^d, \\
    u(0,\cdot) = u_0(\cdot),
  \end{dcases}
\end{equation}
where both $b$ and $\sigma$ are locally Lipschitz continuous, vanish at zero,
and may have superlinear growth at infinity. The noise $\dot{W}$ is a centered
Gaussian noise which is white in time and colored in space with the following
covariance structure
\begin{equation}\label{E:Noise}
  \E\left[\dot{W}(s,y)\dot{W}(t,x)\right] = \delta(t-s)f(x-y)\,,
\end{equation}
where $\delta$ is the Dirac delta measure and $f$ is a nonnegative and
nonnegative-definite function on $\R^d$. The case when $f = \delta_0$ refers to
the space-time white noise. We note that $f$ induces an inner product
\begin{equation}\label{E:H-inner-Prod}
  \langle \varphi, \psi\rangle_{H} = \iint_{\R^{2d}} \varphi(y)\psi(z) f(y-z) \ud y \ud z
\end{equation}
for $\varphi, \psi \in C_c^{\infty}(\R^d)$, i.e., smooth functions with compact
supports in $\R^d$, and define the Hilbert space $H$ to be the completion of
$C^{\infty}_c(\R^d)$ under this inner product.

The solution to~\eqref{E:SHE} is understood in the mild formulation:
\begin{equation}\label{E:Mild}
  \begin{aligned}
    u(t,x) = (p_t*u_0)(x) & +\int_0^t \int_{\R^d} p_{t-s}(x-y)b(u(s,y))\ud y \ud s \\
                          & +\int_0^t \int_{\R^d} p_{t-s}(x-y)\sigma(u(s,y))W(\ud s, \ud y)\,,
  \end{aligned}
\end{equation}
where the stochastic integral is the \textit{Walsh
integral}~\cite{walsh:86:introduction,dalang.khoshnevisan.ea:09:minicourse},
$p_t(x) = \left(2\pi t\right)^{-d/2}\exp\left(-|x|^2 /2t\right)$ is the heat
kernel, and ``$*$" denotes the convolution in the spatial variable. \bigskip

For the one-dimensional deterministic ordinary differential equation
\begin{align}\label{E:ODE}
  \dfrac{dv}{dt} = b(v(t)) \quad \text{with $b \geq 0$ and $v(0)=c>0$,}
\end{align}
the \textit{Osgood condition}~\cite{osgood:98:beweis} characterizes finite time
explosion. Solutions explode in finite time if and only if the following
\textit{finite Osgood condition} holds:
\begin{align}\label{E:Osgood-Fnt}
  \int_c^\infty \frac{1}{b(u)}\ud u< +\infty.
\end{align}
This can be seen by rewriting~\eqref{E:ODE} as follows
$t=\int_0^t\frac{1}{b(u(s))} \ud u(s) .$ After a change of variable, we obtain
\begin{equation*}
    t=\int_{v(0)}^{v(t)}\frac{1}{b(s)} \ud s.
\end{equation*}
This shows that \eqref{E:Osgood-Fnt} is both necessary and sufficient for
blow-up of solutions to~\eqref{E:ODE}. The Osgood condition does not fully
characterize finite time explosion for deterministic partial differential
equations as demonstrated by the famous example provided by
Fujita~\cite{fujita:66:on}.

Finite time explosion for \textit{stochastic partial differential equations}
(SPDEs) with superlinear $b$ and $\sigma$ have only recently gained some
attention. The case of a bounded spatial domain has received more attention.
Bonder and Groisman~\cite{fernandez-bonder.groisman:09:time-space} demonstrated
that if $b$ satisfies~\eqref{E:Osgood-Fnt}, then a one-dimensional stochastic
heat equation with additive space-time white noise will always explode. Dalang,
Khoshnevisan, and Zhang~\cite{dalang.khoshnevisan.ea:19:global} established that
{for the same equation but with multiplicative space-time white noise, global
solutions exist} if $b$ grows no faster than $u \log u$ and $\sigma$ grow slower
than $u (\log u)^{1/4}$. Foondun and Nualart~\cite{foondun.nualart:21:osgood}
showed that, analogous to deterministic ordinary differential equations, the
Osgood condition~\eqref{E:Osgood-Fnt} on $b$ characterizes finite time explosion
for the stochastic heat equation with additive noise on bounded domains in any
spatial dimension. In other words, the additive-noise stochastic heat equation
with bounded initial data will explode in finite time if and only if $b$
satisfies~\eqref{E:Osgood-Fnt}. In contrast, Salins~\cite{salins:22:global}
demonstrated that if $b$ does not satisfy condition~\eqref{E:Osgood-Fnt}, i.e.,
$b$ satisfies the following \textit{infinite Osgood condition} holds:
\begin{align}\label{E:Osgood-Inf}
  \int_c^\infty \frac{1}{b(u)}\ud u= +\infty \quad \text{for all $c>0$,}
\end{align}
then to guarantee the existence of global solutions (i.e., solutions of all
time), one can allow $\sigma$ to grow superlinearly as long as it satisfies an
appropriate Osgood-type assumption. {Shang and
  Zhang~\cite{shang.zhang:22:stochastic} studied a superlinear stochastic heat
equation on a bounded domain driven by a Brownian motion (namely,
space-independent white noise).}

Finite time explosion for the superlinear stochastic wave equations has been
investigated in~\cite{foondun.nualart:22:non-existence}
and~\cite{millet.sanz-sole:21:global}, which proved sufficient conditions for
finite time explosion and those for global solutions, respectively. In both
works, the compact support property, which is inherited from the fundamental
solution of the wave equation, plays a crucial role.

The question of finite time explosion for the stochastic heat equation on
unbounded spatial domains is more complicated because solutions to the
stochastic heat equation can be unbounded in space in the sense that $\Pro\left(
\sup_x |u(t,x)|=+\infty \right)=1$ for any $t>0$. Shang and
Zhang~\cite{shang.zhang:21:global} showed that if $b$ grows like $u\log(u)$ and
if $\sigma$ is bounded and Lipschitz, then there exist global solutions to the
stochastic heat equation on $\R$. Salins~\cite{salins:22:existence} proved an
Osgood-type assumption on $b$, allowing for faster growth than $u\log u$,
implies the existence of global solutions when $\sigma~\equiv 1$. Chen and
Huang~\cite{chen.huang:23:superlinear} investigated the existence of global
solutions to the stochastic heat equation defined on an unbounded spatial domain
under assumptions that guarantee that the solutions are bounded in space. The
current paper is a major improvement over the results
of~\cite{chen.huang:23:superlinear}, examining scenarios where solutions remain
spatially bounded, while allowing for more general assumptions that accommodate
faster growth of the superlinear $b$ and $\sigma$ terms. \bigskip

For the stochastic noise, one commonly assumes the following strengthened
\textit{Dalang's condition}:
\begin{align}\label{E:Dalang+}
  \Upsilon_\alpha\coloneqq (2\pi)^{-d}\int_{\R^d} \frac{\widehat{f}(\xi)\ud\xi}{(1+|\xi|^2)^{1-\alpha}} < \infty\,,
  \quad \text{for some $0 < \alpha < 1$,}
\end{align}
where $\widehat{f}(\xi)$ is the Fourier transform of $f$, namely,
$\widehat{f}(\xi) = \mathcal{F}f(\xi) = \int_{\R^d} f(x) e^{-i x\cdot \xi} \ud
x$. When $\alpha = 0$, it reduces to a weaker \textit{Dalang's
condition}~\cite{dalang:99:extending}:
\begin{align}\label{E:Dalang}
  \Upsilon(\beta)\coloneqq (2\pi)^{-d}\int_{\R^d} \frac{\widehat{f}(\ud \xi)}{\beta+|\xi|^2}<+\infty \quad \text{for some and hence for all $\beta>0$.}
\end{align}
In order to get slightly stronger results, instead of
condition~\eqref{E:Dalang+}, we make the following slightly weaker assumption on
the noise:
\begin{assumption}\label{A:noise}
  There exists $\alpha \in (0,1]$ such that
  \begin{equation}
    \limsup_{s \downarrow 0} s^{1-\alpha} \int_{\mathbb{R}^d} e^{-s|\xi|^2}\widehat{f}(\xi) \ud \xi <+\infty.
  \end{equation}
\end{assumption}
One can show that the strengthened Dalang's condition~\eqref{E:Dalang+} implies
Assumption~\ref{A:noise}. In the one-dimensional space-time white noise setting,
Assumption~\ref{A:noise} is satisfied with $\alpha = 1/2$ but the strengthened
Dalang's condition~\eqref{E:Dalang+} is satisfied only when $\alpha \in
(0,1/2)$. \bigskip

Regarding the drift term $b$ and the diffusion coefficient $\sigma$, we make the
following Osgood-type assumptions following~\cite{salins:22:global*1}.

\begin{assumption}\label{A:Osgood}
  Assume that
  \begin{enumerate}
    \item Both $b$ and $\sigma$ are locally Lipschitz continuous;
    \item $b(0) = 0$ and $\sigma (0) = 0$;
    \item There exists a positive, increasing function
      $h:[0,\infty)\to[0,\infty)$ such that:
      \begin{enumerate}
        \item (Superlinear growth) $\R_+ \ni u\to u^{-1}h(u)$ is non-decreasing
          with $\displaystyle \lim_{u\to 0} u^{-1}h(u) \ge
          \exp\left(1/\alpha\right)$;
        \item (Osgood-type condition of the infinite type) $\displaystyle \int_1^\infty
          \frac{1}{h(u)} \ud u = +\infty$;
        \item For all $u\in \R$, $|b(u)| \le h(|u|)$;
        \item For all $u\in\R$, it holds that
          \begin{equation}\label{E:new-osgood}
            |\sigma(u)| \leq |u|^{1- \alpha/2} (h(|u|))^{\alpha/2} \left(\log\left(\frac{h(|u|)}{|u|}\right)\right)^{-1/2};
          \end{equation}
     \end{enumerate}
    where the constant $\alpha$ in parts (a) and (d) is given in
    Assumption~\ref{A:noise}.
  \end{enumerate}
\end{assumption}

\begin{remark}
  In~\cite{salins:22:global}, Salins assumed that $|\sigma(u)| \leq
  |u|^{1-\gamma} (h(|u|))^\gamma$ for some $\gamma < \alpha/2$ (where the paper
  uses the notation $1-\eta = \alpha$). Our proof is based on exponential tail
  estimates; see Lemma~\ref{L:exponential-estimate} below. The condition on
  $\sigma$, given by~\eqref{E:new-osgood}, allows for a faster growth rate for
  $\sigma$ than that in~\cite{salins:22:global}. The arguments based on the
  exponential tail estimates can also be applied in the finite domain setting.
\end{remark}

Let us introduce some notation. For $p\ge 1$, set
\begin{align*}
  V_p\coloneqq L^p (\R^d)\cap L^\infty(\R^d) \quad\text{and}\quad
  \Norm{\cdot}_{V_p} \coloneqq \max\left(\Norm{\cdot}_{L^p (\R^d)},\: \Norm{\cdot}_{L^\infty(\R^d)}\right).
\end{align*}
To guarantee that solutions to~\eqref{E:SHE} remain bounded in space, we make
the following assumption on the initial data.

\begin{assumption}\label{A:initial}
  The initial data $u_0\in V_p$ for some $p \geq 2$.
\end{assumption}

\begin{remark}
  The space of bounded $L^p(\R^d)$-functions, namely, $V_p$, is monotone
  increasing with respect to $p$, i.e., $\phi\in V_p$ implies that $\phi\in V_q$
  for all $q\ge p\ge 1$; see Lemma~\ref{L:Lr} below. In
  Assumption~\ref{A:initial}, we require $p\geq 2$ because we need to apply the
  \textit{Burkholder-Davis-Gundy (BDG) inequality} in Banach space; see
  Appendix.
\end{remark}

The aim of this present paper is to prove the following theorem:

\begin{theorem}\label{T:Main}
  Suppose that the noise satisfies Assumption~\ref{A:noise} with some $\alpha\in
  (0,1]$, the initial condition satisfies Assumption~\ref{A:initial} for some
  $p\ge (2+d)/\alpha$, and that $b(\cdot)$ and $\sigma(\cdot)$ satisfy
  Assumption~\ref{A:Osgood}. Then, we have the following:
  \begin{enumerate}
    \item There exists a unique mild solution $u(t,x)$ to~\eqref{E:SHE} for all
      $(t,x)\in (0,+\infty)\times\R^d$.
    \item Moreover, if $f$ satisfies the strengthened Dalang's
      condition~\eqref{E:Dalang+} with some $\alpha\in (0,1]$, then the solution
      $u(t,x)$ is H\"older continuous: $u\in C^{\alpha/2 -,\, \alpha
      -}\left((0,T]\times\R^d\right)$ a.s., where $C^{\alpha_1 -, \,
      \alpha_2-}\left(D\right)$ denotes the H\"older continuous function on the
      space-time domain $D$ with exponents $\alpha_1-\epsilon$ and
      $\alpha_2-\epsilon$ in time and space, respectively, for any small
      $\epsilon>0$.
  \end{enumerate}
\end{theorem}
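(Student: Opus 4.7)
The plan is to reduce global existence to a comparison with a deterministic Osgood-type ODE via truncation and a stopping-time argument. For each integer $n\ge 1$, introduce globally Lipschitz cutoffs $b_n,\sigma_n$ that agree with $b,\sigma$ on $[-n,n]$; under Assumptions~\ref{A:noise}, \ref{A:Osgood}, and~\ref{A:initial}, classical Walsh theory yields a unique $V_p$-valued mild solution $u_n$ to the truncated version of~\eqref{E:Mild} on $[0,\infty)\times\R^d$. Define $\tau_n:=\inf\{t\ge 0:\Norm{u_n(t,\cdot)}_{V_p}\ge n\}$. Pathwise uniqueness on each truncated level gives $u_n\equiv u_m$ on $[0,\tau_n\wedge\tau_m]$ for $m\ge n$, so one may set $u:=u_n$ on $\{t<\tau_n\}$; this produces a local solution on $[0,\tau_\infty)$ with $\tau_\infty:=\lim_n\tau_n$, and the task reduces to showing $\tau_\infty=+\infty$ a.s.

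The core ingredient is an exponential moment bound, which I expect to be exactly the content of the referenced Lemma~\ref{L:exponential-estimate}. I would apply a Banach-valued BDG inequality to the stochastic convolution in~\eqref{E:Mild} inside $V_p$, combining the heat-semigroup $L^p\to L^p$ mapping with the bound $\int_{\R^d} e^{-s|\xi|^2}\widehat f(\xi)\ud\xi \lesssim s^{\alpha-1}$ supplied by Assumption~\ref{A:noise}; the hypothesis $p\ge(2+d)/\alpha$ is precisely what makes these two estimates fit together. The particular form of~\eqref{E:new-osgood} together with the constant $\exp(1/\alpha)$ in Assumption~\ref{A:Osgood}(3a) is tailored so that the stochastic contribution is absorbed into an exponential weight $\Phi(u)\sim \exp(\mathrm{const}\cdot \int_1^u \ud s/h(s))$ while the drift contributes only a term of order $h(u)$. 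The resulting functional $V(t):=\E\,\Phi(\Norm{u_n(t\wedge\tau_n,\cdot)}_{V_p})$ then satisfies $V(t)\le V(0)+C\int_0^t h(V(s))\ud s$, and since Assumption~\ref{A:Osgood}(3b) is the infinite Osgood condition, the majorant ODE $z'=Ch(z)$ is global in time; hence $\sup_{t\le T}V(t)<\infty$ for every $T>0$.

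This a priori bound converts into a tail estimate by Markov's inequality: $\Pro(\tau_n\le T)\le \Phi(n)^{-1}\sup_{t\le T}V(t)\to 0$ as $n\to\infty$ since $\Phi(n)\to\infty$. Hence $\tau_\infty=+\infty$ a.s., proving part~(1); uniqueness is inherited from the truncated problems by the same localization. For part~(2), under~\eqref{E:Dalang+} standard Fourier/BDG computations yield an estimate of the form $\E|\mathcal N(t,x)-\mathcal N(t',x')|^{2k}\lesssim \left(|t-t'|^{k\alpha}+|x-x'|^{2k\alpha}\right)\sup_{s\le T}\E\Norm{\sigma(u(s,\cdot))}_{L^\infty}^{2k}$ for the stochastic convolution $\mathcal N$, with the right-hand moments finite by part~(1); Kolmogorov's criterion applied for $k$ large then delivers the claimed $C^{\alpha/2-,\alpha-}$ regularity, since the deterministic drift and the initial-data term $p_t\ast u_0$ are classically smoother.

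The main obstacle is the exponential-moment step: carefully matching the exponents $\alpha$ in Assumption~\ref{A:noise}, $\alpha/2$ in~\eqref{E:new-osgood}, $\exp(1/\alpha)$ in Assumption~\ref{A:Osgood}(3a), and $p\ge(2+d)/\alpha$ in Assumption~\ref{A:initial} so that the Banach-valued BDG estimate closes with a right-hand side dominated by $h\circ V$ modulo the exponential weight $\Phi$. Without this precise bookkeeping the Osgood comparison would fail and the tail bound would yield only local, not global, existence.
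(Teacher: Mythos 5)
Your localization scaffolding (cutoffs, stopping times, a local solution that persists until $\tau_\infty=\lim_n\tau_n$) matches the paper's, and you correctly identify Lemma~\ref{L:exponential-estimate} as the key estimate. But from there you diverge: you propose a Lyapunov/Gronwall-type argument, constructing $V(t)=\E\,\Phi(\Norm{u_n(t\wedge\tau_n,\cdot)}_{V_p})$ with $\Phi(u)\sim\exp\bigl(c\int_1^u \ud s/h(s)\bigr)$ and asserting $V(t)\le V(0)+C\int_0^t h(V(s))\ud s$, then closing via Markov. The paper instead fixes a geometric sequence of levels $3^n$, fixes the deterministic budget $a_n=\min\{\Theta 3^{n+1}/h(3^{n+1}),1/n\}$, uses the exponential \emph{tail} bound from Lemma~\ref{L:exponential-estimate} to show $\Pro(\tau_{n+1}-\tau_n<a_n)\le Cn^{-q}$ with $q>1$, and then invokes Borel--Cantelli, using $\sum a_n=\infty$ (exactly the infinite Osgood condition) to conclude $\tau_\infty=\infty$ a.s.\ These are genuinely different mechanisms, not just cosmetic variants.

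The Lyapunov step is where your proposal has a real gap, and it is not a matter of careful bookkeeping. To obtain $V(t)\le V(0)+C\int_0^t h(V(s))\ud s$ you would need a differential (It\^o-type) evolution for $\Phi(\Norm{u_n(t,\cdot)}_{V_p})$. But the solution only has a \emph{mild} representation~\eqref{E:Mild}, the stochastic convolution $\int_0^t\int p_{t-s}(x-y)\sigma(u)\,W(\ud s,\ud y)$ is \emph{not} a martingale in $t$ (the kernel depends on $t$), and $\Norm{\cdot}_{V_p}=\max(\Norm{\cdot}_{L^p},\Norm{\cdot}_{L^\infty})$ is not a smooth functional, so neither It\^o's formula nor a pathwise chain rule is available. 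Moreover, as $h$ grows (e.g.\ $h(u)=u\prod_k\log(u,k)$), your weight $\Phi(u)$ grows only like an iterated logarithm, so the Markov bound $\Pro(\tau_n\le T)\le\Phi(n)^{-1}\sup_{t\le T}V(t)$ decays too slowly to pass to the limit uniformly in $n$ without controlling how $\sup_{t\le T}V(t)$ grows with $n$; you have not addressed this $n$-dependence. Notably, the paper itself flags this obstruction: the Lyapunov-flavored method of~\cite{chen.huang:23:superlinear} is said to be difficult to push beyond $b(u)\sim u\log u$, and the stopping-time/Borel--Cantelli route was introduced precisely to reach the full Osgood class. Your part~(2) (Kolmogorov continuity under~\eqref{E:Dalang+}) is in the standard direction and is fine as a sketch.
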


The conditions that $b(0)=0$, $\sigma(0)=0$ and $u_0 \in V^p$ guarantee that the
solution to~\eqref{E:SHE} remains in $V^p$ almost surely. This allows us to
perform a localization procedure on the solution. We can prove that solution
cannot explode in finite time by proving that a sequence of hitting times
$\tau_n \uparrow +\infty$. \bigskip

Our main result---Theorem~\ref{T:Main}---provides the optimal condition on the
drift term $b$, which can be seen from the following theorem:

\begin{theorem}\label{T:Ex-Blowup}
  Let $b: \mathbb{R} \to \mathbb{R}$ be a locally Lipschitz continuous,
  nondecreasing and convex function that vanishes at zero ($b(0)=0$). Suppose
  that $\sigma(\cdot)$ vanishes at zero and is bounded, namely, $\sigma(0) = 0$
  and $\sup_{u \in \mathbb{R}}|\sigma(u)| \leq K$. Under the noise
  assumption---Assumption~\ref{A:noise}, if $b$ satisfies the finite Osgood
  condition~\eqref{E:Osgood-Fnt}, then for any $p\ge 2$, there exists some
  nonnegative initial condition $u_0(\cdot)\in V_p$ to~\eqref{E:SHE} such that
  solutions to~\eqref{E:SHE} will explode in finite time with positive
  probability.
\end{theorem}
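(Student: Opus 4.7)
The plan is to project the solution onto a time-dependent heat kernel and reduce the SPDE to a scalar integral inequality that blows up by the Osgood condition. For a fixed $R>0$ to be chosen later, set
\[
F(t) := \int_{\R^d} u(t,y)\, p_{R-t}(y)\, \ud y, \qquad t \in [0, R).
\]
First I would show, starting from the mild form~\eqref{E:Mild}, multiplying by $p_{R-t}(y)$, integrating in $y$, and applying (stochastic) Fubini together with the semigroup identities $p_{R-t} \ast p_t = p_R$ and $p_{R-t} \ast p_{t-s} = p_{R-s}$, that on $\{t < \tau_N\}$ with $\tau_N := \inf\{s : \sup_y u(s,y) \geq N\}$,
\[
F(t) = F(0) + \int_0^t\!\int_{\R^d} p_{R-s}(z)\, b(u(s,z))\, \ud z\, \ud s + M(t),
\]
where $F(0) = \int u_0(y)\, p_R(y)\, \ud y$ and $M(t) := \int_0^t\!\int_{\R^d} p_{R-s}(z)\, \sigma(u(s,z))\, W(\ud s,\ud z)$ is a continuous zero-mean martingale.

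Next, since $p_{R-s}$ is a probability density and $b$ is convex, Jensen's inequality applied pointwise in $s$ gives
\[
F(t) \geq F(0) + \int_0^t b(F(s))\, \ud s + M(t), \qquad t < \tau_N \wedge R/2.
\]
The boundedness $|\sigma| \leq K$ combined with Assumption~\ref{A:noise} yields
\[
\E[M(t)^2] \leq K^2 \int_0^t\!\int_{\R^d} e^{-(R-s)|\xi|^2}\, \widehat{f}(\xi)\, \ud\xi\, \ud s \leq C(R, K, \alpha), \qquad t \leq R/2,
\]
and Doob's maximal inequality then implies that the event $A_L := \{\sup_{t \leq R/2} |M(t)| \leq L\}$ has $\Pro(A_L) > 0$ for every sufficiently large $L$.

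To close the argument, I would choose $L$ large enough that both $\Pro(A_L) > 0$ and $T^*(L) := \int_L^\infty \ud v/b(v) < R/2$, the latter being possible by the finite Osgood condition~\eqref{E:Osgood-Fnt}; then pick a nonnegative $u_0$, for instance $u_0 = c\, \one_{B_1}$ with $c$ sufficiently large (which lies in $V_p$ for every $p \geq 1$), so that $F(0) \geq 2L$. Using $u \geq 0$ (a consequence of the comparison principle against $v \equiv 0$, since $b(0) = \sigma(0) = 0$) and hence $b(F(s)) \geq 0$, we get on $A_L$ that $F(t) \geq L + \int_0^t b(F(s))\, \ud s$ for $t < \tau_N \wedge R/2$; comparison with the ODE $y' = b(y)$, $y(0) = L$, combined with the Osgood condition, forces $F(t) \to \infty$ as $t \to T^*(L) < R/2$. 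Because $u\geq 0$ and $p_{R-t}$ is a probability density, $F(t) \leq \sup_y u(t,y) \leq N$ on $\{\tau_N > R/2\}$, a contradiction; hence $\Pro(A_L \cap \{\tau_N > R/2\}) = 0$ for every $N$, and letting $N \to \infty$ yields $\Pro(\tau^* \leq R/2) \geq \Pro(A_L) > 0$. The hardest step will be the rigorous derivation of the $F$-equation via stochastic Fubini against the time-dependent, non-compactly supported kernel $p_{R-t}$, but the localization at $\tau_N$ together with Assumption~\ref{A:noise} and the boundedness of $\sigma$ should make this tractable.
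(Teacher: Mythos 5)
Your proposal is correct and rests on the same core mechanism as the paper's proof: project $u$ onto the backward heat kernel $p_{R-t}$, use convexity of $b$ and Jensen's inequality to get a scalar integral inequality $F(t) \geq F(0) + M(t) + \int_0^t b(F(s))\,\ud s$, control the martingale $M$, and invoke the finite Osgood condition to force blow-up of the comparison ODE. The two writeups differ in how they arrange the pieces, and yours is arguably cleaner. The paper fixes $u_0 = \Theta p_1$ and runs a two-stage argument: Step~1 applies Jensen in both space and probability to show $\E Y_{1/2} = \infty$ for $\Theta$ large, concluding $\Pro(Y_{1/2} \geq 2L) > 0$ for every $L$; Step~2 then conditions on $\mathcal{F}_{1/2}$, proves an exponential tail bound for $\inf_{t\in[1/2,1]} M^*_t$ via Doob's inequality applied to $\exp(-\lambda M^*_t)$, and runs the ODE comparison on $[1/2,1]$. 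You instead simply choose the initial datum so that $F(0) \geq 2L$ is deterministic, which lets you skip the mean-blowup step entirely and replace the exponential martingale estimate with a plain Doob $L^2$ maximal inequality (exploiting that $\E[M(t)^2]$ is bounded uniformly in $u_0$ because $|\sigma|\leq K$). Both routes are valid; yours uses less machinery, while the paper's two-stage structure is overkill for this bounded-$\sigma$ setting (it would, however, generalize more readily if $Y_0$ could not be made large directly).

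One point to tighten: your final paragraph treats $A_L = \{\sup_{t\leq R/2}|M(t)| \leq L\}$ as a fixed event, but $M$ is only defined up to the explosion time, so strictly speaking you must work with the $\tau_N$-stopped martingales $M^{(N)}$. The uniform-in-$N$ bound $\E[(M^{(N)}(t))^2] \leq K^2 C(R,\alpha)$ gives $\Pro(\tau_N > R/2) \leq \Pro(\sup_{t\leq R/2}|M^{(N)}(t)| > L) \leq 4K^2C/L^2$ for every $N$, and then $N\to\infty$ yields $\Pro(\tau_\infty \leq R/2) \geq 1 - 4K^2C/L^2 > 0$; this is what your "letting $N\to\infty$" step should say. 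Alternatively, one can adopt the paper's framing of arguing by contradiction from global existence, in which case $M$ is globally defined and no localization is needed. Either fix is routine.
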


\begin{remark}
  Theorem~\ref{T:Ex-Blowup} might hold true in certain cases without the
  condition that $\sigma$ is bounded. However, having this restriction
  simplifies its proof and is sufficient to demonstrate the optimality of our
  condition on $b(\cdot)$. Determining the optimal condition on the diffusion
  coefficient $\sigma$ is still an open problem, which will be left for further
  study.
\end{remark}

\begin{remark}
  It is interesting to note that the conclusion of Theorem~\ref{T:Ex-Blowup}
  cannot hold for any initial condition. Indeed, we can take $\sigma$ to be
  identically zero, so that~\eqref{E:SHE} becomes deterministic. Then, for a
  class of small initial conditions, one can then produce non-trivial solutions
  which does not explode in finite time. Indeed if we choose $b(u)=u^p$ with
  $p\geq 1+2/d$, then the solutions will not explode provided we choose the
  initial condition to be small enough.
  See~\cite{quittner.souplet:19:superlinear} for more information on this and
  other blow-up results for non-linear PDEs . In the proof of
  Theorem~\ref{T:Ex-Blowup}, it will be clear that one needs to take the initial
  condition large enough in a suitable sense for the solution to blow-up.
\end{remark}

In order to introduce some examples, we use the following notation for the
repeated logarithm function: $\log(u,1) \coloneqq \log(u)$ and for $k\ge 2$,
$\log(u,k)\coloneqq \log\left(\log u, k-1\right)$.

\begin{example}
  Examples of superlinear Osgood-type $h$ functions include $h(u) = u\log(u)$,
  $h(u) = u \log(u)\log(u,2)$, $h(u) = u \log(u)\log(u,2)\log(u,3)$, and so on,
  as listed in the first column of the tables in Table~\ref{Tb:Eg}. In
  particular, we have two special cases:
  \begin{itemize}
    \item Since $\int_{\mathbb{R}^d} e^{-s|\xi|^2}\ud\xi = (2\pi)^d
      \int_{\mathbb{R}^d} p_s^2(y)\ud y = (\pi /s)^{d/2}$, we see that when
      $d=1$, Assumption~\ref{A:noise} is satisfied with $\alpha = 1/2$.
      Therefore, in case of the spatial dimension one and $\dot{W}$ is
      space-time white noise, we can take $\alpha=1/2$ in~\eqref{E:new-osgood}.
      In particular, we have the concrete examples listed in
      Table~\ref{Tb:ST-White-1D}.
    \item If $f(0)<\infty$, or equivalently, $\int_{\mathbb{R}^d}
      \widehat{f}(\xi)\ud\xi<+\infty$, then $\alpha =1$. In this case, the
      growth rates for $\sigma$ and some typical $h$ are listed in
      Table~\ref{Tb:Bdd}.
  \end{itemize}
  In general, for $\alpha\in (0,1]$ given in Assumption~\ref{A:noise}, the
  function $h$ with repeated logarithms and the corresponding growth bound for
  $\sigma$ are listed in Table~\ref{Tb:General}.
\end{example}

In~\cite{chen.huang:23:superlinear}, it is demonstrated that there exists a
global solution when $b(u)$ grows as fast as $u \log(u)$ and $\sigma(u)$ grows
as fast as $u \left( \log u \right)^{\alpha/2}$. This paper utilizes a method
motivated by~\cite{dalang.khoshnevisan.ea:19:global}, which studied the setting
of a bounded one-dimensional domain. Our new result is stronger than the
previous ones because we can allow $b$ to grow faster than $u \log u$ as long as
$b$ is dominated by an Osgood-type $h$. We believe that the methods used
in~\cite{chen.huang:23:superlinear} cannot easily be extended to deal with $b$
growing faster than $u\log u$.

Interestingly, there remains a specific scenario where the method used
in~\cite{chen.huang:23:superlinear} yields a stronger result than
Theorem~\ref{T:Main} of the current paper. Specifically, when $b$ grows no
faster than $u\log u$ and $\sigma$ grows as $u (\log u)^{\alpha/2}$, the main
result of~\cite{chen.huang:23:superlinear} can be used to prove that solutions
never explode. Based on our current strategy of using stopping time arguments
and exponential estimates, we can let $\sigma$ grow like (see
Table~\ref{Tb:General} with $K=1$)
\begin{align*}
  u \left[ \log(u)\right ]^{\alpha/2} \left[ \log(u,2)\right ]^{-1/2}.
\end{align*}
However, if $\alpha<1$, then we cannot achieve $u (\log u)^{\alpha/2}$ growth
for $\sigma$. This suggests that both approaches to this problem are useful. In
the case where $\alpha=1$, our result allows for $b$ and $\sigma$ that both grow
faster than those in~\cite{chen.huang:23:superlinear}.

\begin{table}[htpb!]
  \centering

  \renewcommand{\arraystretch}{1.7}
  \begin{subtable}{\linewidth}
    \centering
    \begin{tabular}{|c|c|}
      \hline
      $h(u)\sim$                   & $\sigma(u)$ can grow as fast as                                  \\ \hline
      $u\log(u)$                   & $u (\log u)^{1/4}\left(\log(u,2)\right)^{-1/2}$                  \\
      $u\log(u)\log(u,2)$          & $u (\log u)^{1/4}(\log(u,2) )^{-1/4}$                            \\
      $u\log(u)\log(u,2)\log(u,3)$ & $u (\log u)^{1/4}(\log(u,2))^{-1/4}\left(\log(u,3)\right)^{1/4}$ \\ \hline
    \end{tabular}
    \caption{The case when $d=1$ and the noise is the space-time white noise: $\alpha = 1/2$.}
    \label{Tb:ST-White-1D}
  \end{subtable}
  \bigskip

  \begin{subtable}{\linewidth}
    \centering
    \begin{tabular}{|c|c|}
      \hline
      $h(u)\sim$                   & $\sigma(u)$ can grow as fast as                \\ \hline
      $u\log(u)$                   & $u(\log u)^{1/2}\left(\log(u,2)\right)^{-1/2}$ \\
      $u\log(u)\log(u,2)$          & $u(\log u)^{1/2}$                              \\
      $u\log(u)\log(u,2)\log(u,3)$ & $u(\log u)^{1/2}(\log(u,3))^{1/2}$             \\ \hline
    \end{tabular}
    \caption{The case when the noise has a bounded correlation function, i.e., $f(0)<\infty$: $\alpha = 1$.}
    \label{Tb:Bdd}
  \end{subtable}
  \bigskip

  \begin{subtable}{\linewidth}
    \centering
    \begin{tabular}{|c|c|}
      \hline
     $h(u)\sim$                                  & $\sigma(u)$ can grow as fast as                                                                   \\ \hline
     $\displaystyle u \prod_{k=1}^{K} \log(u,k)$ & $\displaystyle u \left(\log(u,2)\right)^{-1/2} \prod_{k=1}^{K} \left(\log(u,k)\right)^{\alpha/2}$ \\ \hline
    \end{tabular}
    \caption{The general case: $\alpha\in(0,1]$ with $K\ge 1$.}
    \label{Tb:General}
  \end{subtable}

  \caption{The growth rate of $\sigma$ is listed in the second column. The $h$'s
  in the first column are typical examples that satisfy the Osgood condition
(see part 3-(b) of Assumption~\ref{A:Osgood}). The growth rate of $\sigma$ is
listed in the second column. }

  \label{Tb:Eg}

\end{table}

\bigskip This paper is organized as follows. In Section~\ref{S:Prelim}, we
introduce some notation and establish some technical results. Our main
result---Theorem~\ref{T:Main}---is proved in Section~\ref{S:Main}.
Theorem~\ref{T:Ex-Blowup} is proved in Section~\ref{S:Example}. Finally, we
recall the Burkholder-Davis-Gundy inequality for the martingale in Banach space
in the Appendix~\ref{S:BDG}.

\section{Some preliminaries}\label{S:Prelim}

In the following, $\Norm{\cdot}_{L^p}$ refers to
$\Norm{\cdot}_{L^p\left(\R^d\right)}$ with $p\in [1,\infty]$ and $\Norm{X}_p
\coloneqq \E\left(|X|^p\right)^{1/p}$. \medskip

We will often use the following simple but useful property of $V_p$, which shows
that the space $V_p$ is monotone in $p$.
\begin{lemma}\label{L:Lr}
  If $1\le p \le r<\infty$, then $V_p \subseteq V_r \subseteq L^r(\mathbb{R}^d)$
  and for any $v \in V_p$,
  \begin{equation}\label{E:Lr}
    \Norm{v}_{L^r} \le \Norm{v}_{V_r} \leq \Norm{v}_{V_p}.
  \end{equation}
\end{lemma}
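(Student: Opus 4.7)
The plan is to prove the lemma using the standard $L^p$-interpolation bound $\Norm{v}_{L^r}^r \leq \Norm{v}_{L^\infty}^{r-p}\,\Norm{v}_{L^p}^{p}$, which is valid whenever $1\leq p\leq r<\infty$. This bound is immediate from writing $|v|^r=|v|^{r-p}|v|^p\leq \Norm{v}_{L^\infty}^{r-p}|v|^p$ pointwise a.e.\ and integrating over $\R^d$. Taking $r$-th roots yields
\begin{equation*}
  \Norm{v}_{L^r} \leq \Norm{v}_{L^\infty}^{1-p/r}\,\Norm{v}_{L^p}^{p/r}.
\end{equation*}

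Given $v\in V_p$, both $\Norm{v}_{L^\infty}$ and $\Norm{v}_{L^p}$ are bounded by $\Norm{v}_{V_p}=\max(\Norm{v}_{L^p},\Norm{v}_{L^\infty})$. Substituting these into the interpolation bound gives
\begin{equation*}
  \Norm{v}_{L^r} \leq \Norm{v}_{V_p}^{1-p/r}\,\Norm{v}_{V_p}^{p/r}=\Norm{v}_{V_p},
\end{equation*}
so in particular $v\in L^r(\R^d)$. Since $v\in L^\infty(\R^d)$ is given and $\Norm{v}_{L^\infty}\leq \Norm{v}_{V_p}$ by definition, we conclude $v\in V_r$ and
\begin{equation*}
  \Norm{v}_{V_r}=\max\left(\Norm{v}_{L^r},\Norm{v}_{L^\infty}\right)\leq \Norm{v}_{V_p},
\end{equation*}
which is the right-hand inequality in~\eqref{E:Lr}. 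The left-hand inequality $\Norm{v}_{L^r}\leq \Norm{v}_{V_r}$ and the inclusion $V_r\subseteq L^r(\R^d)$ are immediate from the definition of the $V_r$-norm. There is no real obstacle here; the entire argument is a one-line consequence of the log-convexity / interpolation of $L^p$ norms on a $\sigma$-finite measure space, and I would write it in that order: interpolation bound, then apply the definition of $\Norm{\cdot}_{V_p}$, then take the max with $\Norm{v}_{L^\infty}$.
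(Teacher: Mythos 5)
Your proof is correct and follows essentially the same route as the paper: both rely on the pointwise bound $|v|^r \le \Norm{v}_{L^\infty}^{r-p}|v|^p$, integrate, and then compare both factors to $\Norm{v}_{V_p}$. The only cosmetic difference is that you take $r$-th roots and phrase it as interpolation, while the paper leaves the inequality in $r$-th-power form; the content is identical.
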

\begin{proof}
  The proof is straightforward. Let $r \in [p,+\infty)$. Then notice that
  $\Norm{v}_{L^r}^r = \int_{\mathbb{R}^d} |v(x)|^r \ud x$, which is less than
  $\left(\int_{\mathbb{R}^d} |v(x)|^p \ud x \right) \sup_{x \in \mathbb{R}^d}
  |v(x)|^{r-p} = \Norm{v}_{L^p}^p\Norm{v}_{L^\infty}^{r-p} \leq
  \Norm{v}_{V_p}^r$.
\end{proof}

\begin{lemma}\label{L:Stable}
  Assume that $b$ and $\sigma$ in~\eqref{E:SHE} satisfy
  Assumption~\ref{A:Osgood}. For any $p\ge 1$, if $v \in V_p$, then the
  compositions $f(v) \in V_p$ and $\sigma(v) \in V_p$. Moreover,
  \begin{equation}\label{E:Stable}
    \Norm{b(v)}_{V_p} \leq h\left(\Norm{v}_{V_p}\right) \quad \text{and} \quad
    \Norm{\sigma(v)}_{V_p} \le \Norm{v}_{V_p}^{1-\alpha/2} h\left(\Norm{v}_{V_p}\right)^{\alpha/2} \left(\log\left(\frac{h(\Norm{v}_{V_p})}{\Norm{v}_{V_p}}\right)\right)^{-1/2}.
  \end{equation}
\end{lemma}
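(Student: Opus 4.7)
The plan is to verify both inequalities by bounding $\|\cdot\|_{L^p}$ and $\|\cdot\|_{L^\infty}$ separately, then take the max. Throughout, I would exploit the two monotonicity facts in Assumption~\ref{A:Osgood}(3): namely that $h$ is increasing and that $u\mapsto h(u)/u$ is non-decreasing.

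For the drift bound, I would first handle $L^\infty$ directly: since $|b(v(x))|\le h(|v(x)|)$ and $h$ is increasing, $\|b(v)\|_{L^\infty}\le h(\|v\|_{L^\infty})\le h(\|v\|_{V_p})$. For the $L^p$ bound, I would use the monotonicity of $u\mapsto h(u)/u$ to write
\[
 |b(v(x))| \le h(|v(x)|) = |v(x)|\cdot \frac{h(|v(x)|)}{|v(x)|}\le |v(x)|\cdot\frac{h(\|v\|_{L^\infty})}{\|v\|_{L^\infty}},
\]
so that $\|b(v)\|_{L^p}\le \|v\|_{L^p}\cdot h(\|v\|_{L^\infty})/\|v\|_{L^\infty}$. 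Using $\|v\|_{L^p}\le \|v\|_{V_p}$ and the same monotonicity of $h(u)/u$ evaluated at $\|v\|_{L^\infty}\le\|v\|_{V_p}$, this is bounded by $h(\|v\|_{V_p})$. Taking the max yields the first inequality of \eqref{E:Stable}.

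For the diffusion bound, set
\[
 g(u)\coloneqq u^{1-\alpha/2}h(u)^{\alpha/2}\Bigl(\log\bigl(h(u)/u\bigr)\Bigr)^{-1/2},\qquad u>0,
\]
so that Assumption~\ref{A:Osgood}(3)(d) reads $|\sigma(u)|\le g(|u|)$. The key step, and the only real obstacle, is to show that both $g$ and $u\mapsto g(u)/u$ are non-decreasing on $(0,\infty)$. Writing
\[
 \frac{g(u)}{u} = \phi\!\left(\frac{h(u)}{u}\right),\qquad \phi(t)\coloneqq t^{\alpha/2}(\log t)^{-1/2},
\]
I would compute $\phi'(t)=\tfrac12 t^{\alpha/2-1}(\log t)^{-3/2}(\alpha\log t-1)$. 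Since $u\mapsto h(u)/u$ is non-decreasing and by hypothesis bounded below by its limit at $0$, which is at least $e^{1/\alpha}$, we have $h(u)/u\ge e^{1/\alpha}$, hence $\alpha\log(h(u)/u)\ge 1$, so $\phi$ is non-decreasing on the range of $h(u)/u$. Composed with the non-decreasing map $u\mapsto h(u)/u$, this gives that $g(u)/u$ is non-decreasing; and then $g(u)=u\cdot g(u)/u$ is a product of two positive non-decreasing functions, so $g$ itself is non-decreasing.

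With these monotonicity properties of $g$ in hand, the rest is mechanical and mirrors the argument for $b$. For the $L^\infty$ part, monotonicity of $g$ gives $\|\sigma(v)\|_{L^\infty}\le g(\|v\|_{L^\infty})\le g(\|v\|_{V_p})$. For the $L^p$ part, using $|\sigma(v(x))|\le |v(x)|\cdot g(|v(x)|)/|v(x)|$ and the monotonicity of $g(u)/u$, we get $\|\sigma(v)\|_{L^p}\le \|v\|_{L^p}\cdot g(\|v\|_{L^\infty})/\|v\|_{L^\infty}\le \|v\|_{V_p}\cdot g(\|v\|_{V_p})/\|v\|_{V_p}=g(\|v\|_{V_p})$. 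Taking the max produces the second inequality in \eqref{E:Stable}, and membership of $\sigma(v)$ (and $b(v)$) in $V_p$ follows a posteriori from finiteness of these bounds.
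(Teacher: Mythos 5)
Your proof is correct and follows essentially the same route as the paper: both treat the $L^\infty$ and $L^p$ pieces separately, both exploit monotonicity of $h$ and of $u\mapsto h(u)/u$, and both establish that $u\mapsto u^{1-\alpha/2}h(u)^{\alpha/2}\left(\log(h(u)/u)\right)^{-1/2}$ and its ratio to $u$ are non-decreasing via a sign check on a derivative, using the hypothesis $h(u)/u\geq e^{1/\alpha}$. Your one small refinement---differentiating the explicit elementary function $\phi(t)=t^{\alpha/2}(\log t)^{-1/2}$ in $t$ and then invoking monotone composition with $u\mapsto h(u)/u$---neatly avoids the paper's ``without loss of generality, $h(x)/x$ is differentiable'' caveat, but the underlying argument is the same.
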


\begin{proof}
  We first prove the $L^\infty(\R^d)$ norm. We claim that
  \begin{equation}\label{E:bsigma-inf}
    \begin{aligned}
      \Norm{b(v)}_{L^\infty}      & \leq h\left(\Norm{v}_{L^\infty}\right) \quad \text{and} \\
      \Norm{\sigma(v)}_{L^\infty} & \leq \Norm{v}_{L^\infty}^{1-\alpha /2} h\left(\Norm{v}_{L^\infty}\right)^{\alpha/2} \left(\log\left(\frac{h(\Norm{v}_{L^\infty})}{\Norm{v}_{L^\infty}}\right)\right)^{-1/2}.
    \end{aligned}
  \end{equation}
  Since $\left|b\left(v(x)\right)\right|\leq |h(v(x))|$ for all $x\ge 0$, the
  first inequality in~\eqref{E:bsigma-inf} is proved by taking supremum on both
  sides of this inequality proves. As for the second inequality
  in~\eqref{E:bsigma-inf}, since $x \mapsto \frac{h(x)}{x}$ is nondecreasing,
  without loss of generality we may assume that $g(x)\coloneqq \frac{h(x)}{x}$
  is differentiable. By Assumption~\ref{A:Osgood}, we see that the function
  $F(x)\coloneqq \frac{g(x)^{\alpha/2}}{\sqrt{\log\left(g(x)\right)}}$ is
  nondecreasing for $x\ge 0$ since
  \begin{align*}
     F'(x) = \frac{g(x)^{-1+\alpha/2} g'(x)}{2\alpha \left[\log\left(g(x)\right)\right]^{3/2}}
    \times \left(\log\left(g(x)\right) - \frac{1}{\alpha}\right)
    \ge 0.
  \end{align*}
  This proves the second inequality in~\eqref{E:bsigma-inf}.

  The interesting part of the proof is showing that the $L^p (\R^d)$ norm is
  bounded. To this end, observe that
  \begin{equation*}
    \Norm{b(v)}^p_{L^p }
    = \int_{\R^d} |b\left(v(x)\right)|^p \ud x
    \leq \int_{\R^d} |v(x)|^p \left(\frac{h\left(|v(x)|\right)}{|v(x)|}\right)^p \ud x
    \leq \Norm{v}_{L^p }^p \sup_{x \in \R^d} \left(\frac{h\left(|v(x)|\right)}{|v(x)|}\right)^p,
  \end{equation*}
  where the first inequality is obtained by using the bound on $b$ given in Assumption \ref{A:Osgood}.
  From the assumption that $v \mapsto \frac{h(v)}{v}$ is increasing, the above
  display is bounded by
  \begin{align}
    \leq \Norm{v}_{L^p }^p \left(\frac{h\left(\Norm{v}_{L^\infty}\right)}{\Norm{v}_{L^\infty}}\right)^p
    \leq \Norm{v}_{V_p}^p \left(\frac{h(\Norm{v}_{V_p})}{\Norm{v}_{V_p}}\right)^p.
  \end{align}
  Therefore, we can conclude that $\Norm{b(v)}_{L^p } \leq h(\Norm{v}_{V_p})$.
  Combining this with the first relation in~\eqref{E:bsigma-inf} proves the
  first inequality in~\eqref{E:Stable}. The argument for the case of $\sigma$ is
  similar and one needs to use Assumption \ref{A:Osgood} and the following inequality:
  \begin{equation}
    \Norm{\sigma(v)}_{L^p }^p
    \le \int_{\R^d} |v(x)|^p \left(\frac{h\left(|v(x)|\right)}{|v(x)|}\right)^{\alpha p/2}\left(\log\left(\frac{h(|v(x)|)}{|v(x)|}\right)\right)^{-p/2} \ud x.
  \end{equation}
  The rest of the arguments are the same as those for $b$.
\end{proof}

\begin{lemma}\label{L:Iterate}
  (1) Let $p\ge 1$. If for some $T, M>0$, $\Psi:\R_+\times\R^d \to \R$ satisfies
  $ \displaystyle \sup_{t\in[0,T]} \Norm{\Psi(t,\cdot)}_{V_p}\le M<\infty$, then
  \begin{equation}\label{E:Iterate-b}
    \Norm{\int_0^t \int_{\mathbb{R}^d} p_{t-s}(\cdot-y) \Psi(s,y)\ud y\ud s}_{V_p}
     \leq t M.
  \end{equation}
  (2) Let $\Phi:\Omega\times\R_+\times\R^d\to \R$ be an adapted and jointly
  measurable random field. Suppose that $p\ge 2$. If for any $T>0$, there exists
  a constant $M>0$ such that $\sup_{t \in [0,T]} \Norm{\Phi(t,\cdot)}_{V_p} \leq
  M$ a.s., then for all ${k> \max\{(2 + d)/\alpha,p\}}$, there exists a constant
  $C>0$ depending only on $(d,\alpha, p)$, but not on $(T,M,k)$, such that
  \begin{equation}\label{E:Iterate-s}
    \E\left(\sup_{t \in [0,T]} \Norm{\int_0^t \int_{\mathbb{R}^d} p_{t-s}(\cdot-y) \Phi(s,y)W(\ud s,\ud y)}_{V_p}^k\right)
    \leq {C^k k^{k/2}} T^{(\alpha k-d)/2} (1 + T^{d/2}) M^k.
  \end{equation}
\end{lemma}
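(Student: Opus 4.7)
For Part (1), direct pointwise estimates suffice. For the $L^\infty$-norm, since $p_{t-s}$ is a probability density on $\R^d$,
\[
  \left|\int_0^t\int_{\R^d}p_{t-s}(x-y)\Psi(s,y)\,\ud y\,\ud s\right|\le\int_0^t\Norm{\Psi(s,\cdot)}_{L^\infty}\,\ud s\le tM.
\]
For the $L^p$-norm, Minkowski's integral inequality in $s$ combined with Young's convolution inequality (using $\Norm{p_{t-s}}_{L^1(\R^d)}=1$) yields the same bound $tM$. Taking the maximum of the two norms gives \eqref{E:Iterate-b}.

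Part (2) is substantial. Denote the stochastic convolution on the left of \eqref{E:Iterate-s} by $N(t,x)$. Two obstructions arise: $N(t,\cdot)$ is not a martingale in $t$ (its kernel $p_{t-s}$ depends on both endpoints), and $L^\infty(\R^d)$ is not of type $2$, so the Banach-valued BDG from Appendix~\ref{S:BDG} does not directly apply to the $L^\infty$-component. The strategy is the \emph{factorization method}: for $\theta\in(1/k,\,\alpha/2-d/(2k))$---an interval that is nonempty precisely because $k>(2+d)/\alpha$---set
\[
  Y_\theta(s,y)=\int_0^s\int_{\R^d}(s-r)^{-\theta}p_{s-r}(y-z)\Phi(r,z)\,W(\ud r,\ud z),
\]
and use stochastic Fubini to express $N(t,x)$ as a pathwise convolution of $Y_\theta$ against $(t-s)^{\theta-1}p_{t-s}(x-y)$. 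H\"older's inequality combined with Part (1) then converts $\sup_{t\in[0,T]}\Norm{N(t,\cdot)}_{V_p}^k$ into an $L^1_s$-integral of $\E\Norm{Y_\theta(s,\cdot)}_{V_p}^k$ times a $T$-power prefactor.

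For each fixed $s$, $Y_\theta(s,\cdot)$ is the terminal value of an $L^p(\R^d)$-valued martingale in the $r$-variable (since $s$ is merely a parameter), so the Banach-valued BDG applies to the $L^p$-component because $L^p$ is of type $2$ for $p\ge 2$. The quadratic variation in $L^{p/2}(\R^d)$ is controlled using $\Norm{\Phi(r,\cdot)}_{L^p}\le M$, a Young-type estimate for the kernel $p_{s-r}$, and the $H$-norm bound $\Norm{p_{s-r}(y-\cdot)}_H^2 = (2\pi)^{-d}\int_{\R^d}e^{-(s-r)|\xi|^2}\widehat{f}(\xi)\ud\xi\le C(s-r)^{\alpha-1}$ obtained via Plancherel and Assumption~\ref{A:noise}. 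This produces a moment bound of the form $\E\Norm{Y_\theta(s,\cdot)}_{L^p}^k\le C^k k^{k/2}\,M^k\,s^{k(\alpha/2-\theta)}$ (with $k^{k/2}$ being the sharp BDG constant), which contributes a $T^{\alpha k/2}$-type term to the final $L^p$-estimate after integration in $s$.

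The main obstacle is the $L^\infty$-component $\E\Norm{Y_\theta(s,\cdot)}_{L^\infty(\R^d)}^k$. Since $L^\infty$ is not type $2$, the plan is to derive moment bounds on spatial increments via scalar BDG: using $|e^{-iy\cdot\xi}-e^{-iy'\cdot\xi}|\le(|y-y'||\xi|)\wedge 2$ together with Assumption~\ref{A:noise},
\[
  \Norm{Y_\theta(s,y)-Y_\theta(s,y')}_k\le C\sqrt{k}\,M\,|y-y'|^{\beta}\,s^{(\alpha-\beta-2\theta)/2}\quad\text{for any }\beta<\alpha-2\theta.
\]
Kolmogorov's continuity theorem then controls $\Norm{Y_\theta(s,\cdot)}_{L^\infty}$ provided $\beta k>d$; the interval $\theta\in(1/k,\alpha/2-d/(2k))$ is precisely what allows $\beta<\alpha-2\theta$ and $\beta>d/k$ to be satisfied simultaneously, matching the hypothesis $k>(2+d)/\alpha$. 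Passage from bounded balls to all of $\R^d$ uses the $L^p$-decay of $Y_\theta(s,\cdot)$ at infinity, inherited from $\Phi\in V_p$. Combining the $L^p$- and $L^\infty$-moment bounds via $\Norm{\cdot}_{V_p}=\max(\Norm{\cdot}_{L^p},\Norm{\cdot}_{L^\infty})$, the $L^\infty$-side contributes $T^{(\alpha k-d)/2}$ and the $L^p$-side contributes $T^{\alpha k/2}$, summing to $T^{(\alpha k-d)/2}(1+T^{d/2})$ as claimed.
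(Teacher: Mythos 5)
Part (1) and the overall factorization strategy match the paper exactly: write the stochastic convolution via the factorization lemma with a one-parameter auxiliary process $Y$, then reassemble. Your $L^p$-component also runs along the same lines as the paper's Step II (treat $Y(s,\cdot)$ for fixed $s$ as the terminal value of an $L^p$-valued martingale, apply the Banach-valued BDG from the Appendix, bound the $H$-norm of the kernel via Plancherel and Assumption~\ref{A:noise}).

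The real divergence is the $L^\infty$-component, and here your plan has a genuine gap. You propose to get sup bounds on $Y_\theta(s,\cdot)$ from Kolmogorov's continuity theorem applied to increment moment estimates, plus a hand-waved ``passage from bounded balls to all of $\R^d$'' via the $L^p$-decay. This last step is not trivial: on an unbounded domain a H\"older seminorm alone does not control the sup, so one needs a lattice covering or a Gagliardo--Nirenberg-type interpolation $\Norm{\cdot}_{L^\infty}\lesssim\Norm{\cdot}_{L^p}^{1-\vartheta}[\cdot]_{C^\gamma}^{\vartheta}$, and in either case one must track how the constants in Kolmogorov/Garsia--Rodemich--Rumsey depend on $k$ in order to preserve the crucial $C^k k^{k/2}$ prefactor that drives the exponential tail estimate (Lemma~\ref{L:exponential-estimate}). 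None of this is carried out, and it is not obvious that the constants survive.

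The paper sidesteps all of this with a much more elementary device. Starting from the factorization identity \eqref{E:factor-Z}, one applies H\"older's inequality with exponents $k$ and $k/(k-1)$ in the $(r,z)$-integral and uses the pointwise kernel bound
\[
  |p_{t-r}(x-z)|^{\frac{k}{k-1}}=p_{t-r}(x-z)|p_{t-r}(x-z)|^{\frac{1}{k-1}}\le C(t-r)^{-\frac{d}{2(k-1)}}p_{t-r}(x-z),
\]
together with the fact that $p_{t-r}(\cdot)$ is a probability density, to obtain the \emph{deterministic} pointwise bound
\[
  |Z(t,x)|^k \le C\, t^{\frac{\beta k}{2}-\frac{d}{2}-1}\int_0^t\int_{\R^d}|Y(r,z)|^k\,\ud z\,\ud r,
\]
valid for every $(t,x)$ once $k>(2+d)/\beta$. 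Taking the supremum over $(t,x)\in[0,T]\times\R^d$ and then the expectation reduces everything to $\int_0^T\int_{\R^d}\E|Y(r,z)|^k\,\ud z\,\ud r$, which is controlled by the scalar BDG inequality applied pointwise in $z$ followed by Minkowski and H\"older in the spatial variable. No Kolmogorov continuity, no lattice covering, no modulus of continuity -- and the $C^k k^{k/2}$ constant falls out cleanly from BDG. This is the step you should adopt; the rest of your argument is sound.
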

\begin{proof}
  Part~(1) is obtained by an application of the Minkowski inequality. Part~(2)
  will be proved in three steps. Denote the stochastic integral by $Z(t,x)$. By
  the factorization lemma (see~\cite[Section 5.3.1]{da-prato.zabczyk:14:stochastic}), for $\beta \in
  (0,\alpha)$,
  \begin{align} \label{E:factor-Z}
    Z(t,x) = & \frac{\sin\left(\beta\pi/2\right)}{\pi} \int_0^t \int_{\mathbb{R}^d} (t-r)^{-1+ \beta/2} Y(r,z) p_{t-r}(x-z) \ud z \ud r, \\
    Y(r,z) = & \int_0^r \int_{\R^d} (r-s)^{-\beta/2} p_{r-s}(z-y) \Phi(s,y) W(\ud s,\ud y). \label{E:factor-Y}
  \end{align}
  In the following, we use $C$ to denote a generic constant that does not depend
  on $T$, $M$, $k$ and $p$, whose value may change at each appearance. \medskip

  \noindent\textbf{Step I.~} In this step, we will show that for all
  $k>\max\left((2+d)/\beta, p\right)$, $p\ge 2$, and $T>0$, it holds that
  \begin{align}\label{E:Step-1}
    \E\left(\sup_{(t,x)\in [0,T]\times\R^d} |Z(t,x)|^k\right)
     & \leq {C^k k^{k/2}} M^k {T^{(\alpha k -d)/2}}.
  \end{align}
  By H\"older inequality with exponents $k$ and $\frac{k}{k-1}$, for arbitrary
  $t>0$ and $x \in \mathbb{R}^d$,
  \begin{align*}
    |Z(t,x)|^k \leq C^k\left(\int_0^t \int_{\mathbb{R}^d} (t-s)^{\frac{(\beta/2 -1)k}{k-1}}|p_{t-s}(x-y)|^{\frac{k}{k-1}} \ud y\ud s\right)^{k-1} \int_0^t \int_{\mathbb{R}^d}|Y(s,y)|^k\ud y\ud s.
  \end{align*}
  We use the fact that
  \begin{align*}
    |p_{t-s}(x-y)|^{\frac{k}{k-1}} = p_{t-s}(x-y)|p_{t-s}(x-y)|^{\frac{1}{k-1}}\leq C(t-s)^{-\frac{d}{2(k-1)}}p_{t-s}(x-y),
  \end{align*}
  along with the fact that $p_{t-s}(\cdot)$ is a density to bound the above
  expression by
  \begin{align*}
    |Z(t,x)|^k \leq \left(\int_0^t (t-s)^{\frac{(\beta/2 -1)k}{k-1} - \frac{d}{2(k-1)}}\ud s\right)^{k-1} \int_0^t \int_{\mathbb{R}^d}|Y(s,y)|^k\ud y\ud s.
  \end{align*}
  Notice that
  \begin{align*}
    \frac{(\beta/2 -1)k}{k-1} - \frac{d}{2(k-1)} >-1
    \quad \Longleftrightarrow \quad
    k> \frac{2 + d}{\beta}.
  \end{align*}
  Hence, if we choose $k> (2+d)/\beta$, then the first integral is finite and
  \begin{align}\label{E:Z-bound}
    |Z(t,x)|^k &\leq C t^{\frac{\beta k}{2} -\frac{d}{2} -1} \int_0^t \int_{\mathbb{R}^d}|Y(s,y)|^k \ud y \ud s.
  \end{align}
  Hence,
  \begin{align*}
    \E\left(\sup_{(t,x)\in [0,T]\times\R^d} |Z(t,x)|^k\right)
    & \le C T^{\frac{\beta k}{2} -\frac{d}{2} -1} \int_0^T\ud r\int_{\R^d}\ud z\: \E\left(\left|Y(r,z)\right|^k\right).
  \end{align*}
  Now, for $Y(r,z)$, by the Burkholder-Davis-Gundy inequality, we see that
  \begin{align} \label{E:BDG-Y}
     \Norm{Y(r,z)}_k^2 \le
     {8k} \int_0^r\ud s \iint_{\R^{2d}}\ud y\ud y'\: (r-s)^{-\beta} f(y-y')
          \:  p_{r-s}(y) \Norm{\Phi(s,z-y)}_k   &  \nonumber \\
      \times  p_{r-s}(y') \Norm{\Phi(s,z-y')}_k & \:.
  \end{align}
  Then by the Minkowski inequality, we see that
  \begin{align*}
    \int_{\R^d} \E\left(\left|Y(r,z)\right|^k\right) \ud z
      \le & {C^k k^{k/2}} \int_{\R^d} \Bigg(
              \int_0^r\ud s \iint_{\R^{2d}}\ud y\ud y'\: (r-s)^{-\beta} f(y-y')\\
          & \quad \quad \quad \times p_{r-s}(y)p_{r-s}(y')
              \Norm{\Phi(s,z-y)}_k \Norm{\Phi(s,z-y')}_k
            \Bigg)^{k/2} \ud z \\
      \le & {C^k k^{k/2}} \Bigg(
              \int_0^r\ud s \iint_{\R^{2d}}\ud y\ud y'\: (r-s)^{-\beta} f(y-y')\\
          & \quad \quad \quad \times p_{r-s}(y)p_{r-s}(y')
              \Norm{\Norm{\Phi(s,\cdot-y)}_k \Norm{\Phi(s,\cdot-y')}_k}_{L^{k/2}}
            \Bigg)^{k/2}\\
      \le & {C^k k^{k/2}} \Bigg(
              \int_0^r\ud s \iint_{\R^{2d}}\ud y\ud y'\: (r-s)^{-\beta} f(y-y') \\
          & \quad \quad \quad \times p_{r-s}(y)p_{r-s}(y')
              \Norm{\Norm{\Phi(s,\cdot)}_k}_{L^{k}}^2
            \Bigg)^{k/2},
  \end{align*}
  where in the last inequality we applied the H\"older inequality. If $k\ge p$,
  then we can use the Fubini theorem and the assumption on $\Phi(\cdot, \circ)$
  to obtain that
  \begin{align*}
    \Norm{\Norm{\Phi(s,\cdot)}_k}_{L^{k}}^k
    & =   \E\left(\int_{\R^d}|\Phi(s,z)|^k\ud z\right)                                             \\
    & \le \E\left(\left(\int_{\R^d}|\Phi(s,z)|^p\ud z\right)\Norm{\Phi(s)}_{L^\infty}^{k-p}\right) \\
    & \le \E\left(\sup_{s\in[0,T]}\Norm{\Phi(s,\cdot)}_{V_p}^k\right) \leq M^k,
  \end{align*}
  for all $s\in [0,T]$, Hence,
  \begin{align*}
    \int_{\R^d} \E\left(\left|Y(r,z)\right|^k\right) \ud z
      \le & {C^k k^{k/2}} M^k \left(
        \int_0^r\ud s \iint_{\R^{2d}}\ud y\ud y'\: (r-s)^{-\beta} f(y-y')
        p_{r-s}(y)p_{r-s}(y')
      \right)^{k/2}.
  \end{align*}
  By the Plancherel theorem, we see that
  \begin{align*}
       \int_0^r  \ud s \iint_{\R^{2d}}\ud y\ud y'\: (r-s)^{-\beta} f(y-y') p_{r-s}(y)p_{r-s}(y')
    = & (2\pi)^{-d}\int_0^r\ud s\: (r-s)^{-\beta} \int_{\R^d}e^{-(r-s) |\xi|^2}\widehat{f}(\xi)\ud \xi \\
    = & (2\pi)^{-d}\int_0^r\ud s\: s^{-\beta} \int_{\R^d}e^{-s |\xi|^2}\widehat{f}(\xi)\ud \xi.
  \end{align*}
  By Assumption~\ref{A:noise} and the fact that the function $s\to
  \int_{\R^d}e^{-s |\xi|^2}\widehat{f}(\xi) \ud \xi$ is non-increasing, we see
  that for some universal constant $C>0$,
  \begin{align*}
    \int_{\R^d}e^{-s |\xi|^2}\widehat{f}(\xi) \ud \xi \le  C s^{-(1-\alpha)},
    \quad \text{for all $s>0$}.
  \end{align*}
  Hence,
  \begin{align}\label{E:quad-var}
    \int_0^r\ud s \iint_{\R^{2d}}\ud y\ud y'\: (r-s)^{-\beta} f(y-y') p_{r-s}(y)p_{r-s}(y')
    \leq C \int_0^r  s^{-\beta+\alpha-1}\ud s
    = C r^{\alpha-\beta}.
  \end{align}
  Thus, we have that
  \begin{align}\label{E:EYp}
    \int_{\R^d} \E\left(\left|Y(r,z)\right|^k\right) \ud z
    \le {C^k k^{k/2}} M^k {r^{(\alpha-\beta)k/2}}.
  \end{align}
  Finally, if $k> \max\left((2+d)/\beta,p\right)$, by putting~\eqref{E:EYp} back
  into~\eqref{E:Z-bound}, we prove the claim in~\eqref{E:Step-1}. \bigskip

  \noindent\textbf{Step II.~} In this step, we will show that for all
  $k>\max\left((2+d)/\beta, 2\right)$, $p\ge 2$, and $T>0$,
  it holds that
  \begin{align}\label{E:Step-2}
    \E\left(\sup_{t\in [0,T]} \Norm{Z(t,\cdot)}_{L^p}^k\right)
    \le {C^k k^{k/2}} M^k T^{\alpha k/2}.
  \end{align}
  By the Minkowski inequality and the H\"older inequality, we see that for
  $t\in[0,T]$,
  \begin{align*}
    \Norm{Z(t,\cdot)}_{L^p}
    & \le \int_0^t \ud r \: (t-r)^{-1 + \beta /2} \int_{\R^d} \ud z\: p_{t-r}(z) \Norm{Y(r,\cdot-z)}_{L^p} \\
    & = \int_0^t (t-r)^{-1 + \beta /2} \Norm{Y(r,\cdot)}_{L^p} \ud r                                       \\
    & \le \left(\int_0^t (t-r)^{\frac{k}{k-1}\left(-1 + \beta /2\right)}\ud r\right)^{\frac{k-1}{k}} \left(\int_0^t \Norm{Y(r,\cdot)}_{L^p}^k \ud r\right)^{1/k}.
  \end{align*}
  If $k> (2+d)/\beta$, the above $\ud r$--integral is finite and hence,
  \begin{align}\label{E:factored-moment}
    \E\left(\sup_{t\in[0,T]}\Norm{Z(t,\cdot)}_{L^p}^k\right)
    \le & {C^k} T^{-1+\beta k/2}\int_0^T \E\left(\Norm{Y(r,\cdot)}_{L^p}^k\right) \ud r.
  \end{align}
  To estimate $\E\left(\Norm{Y(r,\cdot)}_{L^p}^k\right)$, if we assume that
  $k,p\ge 2$, then we can apply the BDG inequality in Lemma~\ref{L:Banach-BDG}
  to get
  \begin{align*}
       & \E\left(\|Y(r,\cdot)\|^k_{L^p}\right)                                                                                                                                                                                                              \\
  \leq & C^k k^{\frac{k}{2}} \E \left[\left(\int_0^t \left[\int_{\R^d} \left(\iint_{\R^{2d}} (t-s)^{-\beta}p_{t-s}(x-y)p_{t-s}(x-y')\Phi(s, y)\Phi(s,y')f(y-y')\ud y  \ud y '\right)^{\frac{p}{2}} \ud x  \right]^{\frac{2}{p}} \ud s \right)^{k/2}\right]  \\
  =    & C^k k^{\frac{k}{2}} \E \left[\left(\int_0^t (t-s)^{-\beta} \left[\int_{\R^d} \left(\iint_{\R^{2d}} p_{t-s}(y)p_{t-s}(y')\Phi(s, x-y)\Phi(s,x-y')f(y-y')\ud y  \ud y '\right)^{\frac{p}{2}} \ud x  \right]^{\frac{2}{p}} \ud s \right)^{k/2}\right] \\
  \leq & C^k k^{\frac{k}{2}} \E \left[\left(\int_0^t \iint_{\R^{2d}} (t-s)^{-\beta}p_{t-s}(y)p_{t-s}(y')\left\|\Phi(s, \cdot-y)\Phi(s,\cdot-y')\right\|_{L^{p/2}}f(y-y')\ud y  \ud y ' \ud s \right)^{k/2}\right]                                           \\
  \leq & C^k k^{\frac{k}{2}} \E \left[\left(\int_0^t \iint_{\R^{2d}} (t-s)^{-\beta}p_{t-s}(y)p_{t-s}(y')\left\|\Phi(s, \cdot)\right\|^2_{L^{{p}}}f(y-y')\ud y  \ud y ' \ud s \right)^{k/2}\right]\,.
    \end{align*}
  Then based on the assumption that $\Norm{\Phi(s,\cdot)}_{L^p} \leq M$ a.s.,
  and thanks to~\eqref{E:quad-var}, we see that
  \begin{equation*}
    \E\left(\Norm{Y(r,\cdot)}_{L^p}^k\right)
    \leq C^k k^{k/2} M^k r^{(\alpha-\beta)k/2}.
  \end{equation*}
  Combining the above estimate with~\eqref{E:factored-moment}
  proves~\eqref{E:Step-2}. \bigskip

  \medskip\noindent\textbf{Step III.~} Finally, combining the results from the
  previous two steps shows that if $k>\max\left((2+d)/\beta,p\right)$ and $p\ge
  2$, then for all $T>0$,
  \begin{align*}
    \E\left(\sup_{t\in[0,T]}\Norm{Z(t,\cdot)}_{V_p}^k\right)
    \le & \E\left(\sup_{t\in[0,T]}\left(\Norm{Z(t,\cdot)}_{L^{p}}^k + \Norm{Z(t,\cdot)}_{L^{\infty}}^k\right)\right)                         \\
    \le & C^k\E\left(\sup_{t\in[0,T]}\Norm{Z(t,\cdot)}_{L^{p}}^k\right) + C^k\E\left(\sup_{t\in[0,T]}\Norm{Z(t,\cdot)}_{L^{\infty}}^k\right) \\
    \le & {C^k k^{k/2}} M^k  \left(T^{(\alpha k-d)/2} + T^{\alpha k/2}\right).
  \end{align*}
  This completes the proof of Lemma~\ref{L:Iterate}.
\end{proof}

We next give the exponential estimates for the stochastic integral in the
previous lemma, which will be used in the proof of our main theorem. The
space-time white noise case has been considered by Athreya, \textit{et
al.}~\cite{athreya.joseph.ea:21:small}; see
also~\cite{mueller:91:on,cerrai.rockner:04:large,khoshnevisan:14:analysis}.

\begin{lemma}[Exponential estimates]\label{L:exponential-estimate}
  Assume that $\Phi:\Omega\times\R_+\times\R^d\to\R$ be an adapted and jointly
  measurable random field and $p\ge 2$. If for any $T\ge 0$, there exists a
  constant $M = M(T,p) \ge 0$ such that
  \begin{align*}
    \sup_{t \in [0,T]} \Norm{\Phi(t,\cdot)}_{V_p} \leq M, \quad \text{a.s.},
  \end{align*}
  then, there exists a constant $C>0$ independent of $M$ and $T$ such that for
  any $\delta>0$,
  \begin{equation}\label{E:exponential-estimate}
    \Pro \left(\sup_{t \in [0,T]} \Norm{ \int_0^t \int_{\mathbb{R}^d} G(t-s,x-y)\Phi(s,y)W(\ud s,\ud y )}_{V_p}>\delta \right)
    \leq C {\left(1 + T^{-d/2}\right)} e^{-C\delta^2M^{-2} T^{-\alpha}}
  \end{equation}
  where $\alpha$ is from Assumption \ref{A:noise}.
\end{lemma}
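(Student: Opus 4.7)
The plan is to deduce~\eqref{E:exponential-estimate} from the $k$-th moment estimate in Lemma~\ref{L:Iterate}(2) via Markov's inequality, optimizing over the moment order $k$. Denote the stochastic integral by $Z(t,x)$. Rewriting~\eqref{E:Iterate-s} in the factored form
\begin{equation*}
  \E\left(\sup_{t\in[0,T]} \Norm{Z(t,\cdot)}_{V_p}^k\right) \leq (1+T^{-d/2}) \left(C k^{1/2} M T^{\alpha/2}\right)^k,
\end{equation*}
valid for every $k > k_\star := \max\{(2+d)/\alpha, p\}$, and applying Markov's inequality gives
\begin{equation*}
  \Pro\left(\sup_{t\in[0,T]} \Norm{Z(t,\cdot)}_{V_p} > \delta\right) \leq (1 + T^{-d/2}) \left(\frac{C k^{1/2} M T^{\alpha/2}}{\delta}\right)^k.
\end{equation*}

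I would then choose $k = k_0 := \delta^2/(e C^2 M^2 T^\alpha)$ (and, if needed, replace it by its integer ceiling at the cost of a harmless constant). This choice makes the base $C k^{1/2} M T^{\alpha/2}/\delta$ equal to $e^{-1/2}$, so the right-hand side collapses to
\begin{equation*}
  (1 + T^{-d/2})\, e^{-k_0/2} \;=\; (1 + T^{-d/2})\, \exp\!\left(-\tfrac{c\,\delta^2}{M^2 T^{\alpha}}\right)
\end{equation*}
with $c = 1/(2eC^2)$. This is exactly the form stated in~\eqref{E:exponential-estimate}.

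The one subtlety is that the computation requires $k_0 > k_\star$, which is equivalent to $\delta > C_1 M T^{\alpha/2}$ for a constant $C_1 = C_1(d,\alpha,p)$. In the complementary range $\delta \leq C_1 M T^{\alpha/2}$, the quantity $\delta^2 /(M^2 T^\alpha)$ is bounded above by $C_1^2$, so $\exp(-c\delta^2/(M^2 T^\alpha))$ is bounded below by a positive constant; the trivial bound $\Pro(\cdots) \leq 1$ then already implies~\eqref{E:exponential-estimate} after enlarging the constant on the right-hand side to absorb the factor $e^{cC_1^2}$. Combining the two regimes yields the stated estimate. The genuine work lies in the sharp moment bound~\eqref{E:Iterate-s}; once it is in hand, the Markov-plus-optimization step is routine, and the $(1+T^{-d/2})$ prefactor is simply inherited from the small-$T$ behavior of~\eqref{E:Iterate-s}.
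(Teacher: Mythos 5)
Your argument is correct and delivers the stated bound, but it travels a genuinely different path from the paper. The paper computes the full exponential moment
$\E\bigl[\exp(\lambda \sup_{t\in[0,T]}\Norm{Z(t,\cdot)}_{V_p})\bigr]$
by summing the Taylor series, using Stirling's formula to convert $k^{k/2}/k!$ into $\Theta^{k+1}/\Gamma(k/2+1)$, recognizing the resulting series as $\exp(\cdot)\bigl[1+\Erf(\cdot)\bigr]$, and then applying the exponential Chebyshev inequality with an optimized $\lambda$. You instead apply Markov directly to a single $k$-th moment and optimize over the moment order $k$, which is the textbook dual route to sub-Gaussian tails when moments grow like $(C\sqrt{k})^k$; the two are known to produce the same tail exponent up to constants. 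Your version is more elementary (no Stirling, no special-function identity) and makes it visible that the tail decay is driven purely by the $k^{k/2}$ growth rate in~\eqref{E:Iterate-s}. The paper's version produces the intermediate exponential-moment estimate~\eqref{E:exp-moment}, which could be reused elsewhere, and automatically handles all $\delta$ at once with no case split. Your case split on $k_0 \lessgtr k_\star$ is handled correctly: when $\delta \le C_1 M T^{\alpha/2}$ the exponential factor is bounded below by a constant so the trivial bound $\Pro(\cdots)\le 1$ suffices after enlarging $C$; when $\delta > C_1 M T^{\alpha/2}$ your optimal $k_0$ lies in the admissible range and the computation gives $(1+T^{-d/2})e^{-k_0/2}$ with $k_0 = \delta^2/(eC^2M^2T^\alpha)$, matching the claim. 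One minor point worth making explicit: you should also take $k_0$ large enough (or $C_1$ large enough) to ensure $k_0 > 2$ so that the Banach-space BDG hypothesis in Lemma~\ref{L:Banach-BDG} is met, but since $k_\star \ge p \ge 2$ this is already subsumed in your threshold.
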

\begin{proof}
  Denote $Z(t,x) \coloneqq \int_0^t \int_{\mathbb{R}^d} G(t-s,x-y)\Phi(s,y)W(\ud
  s, \ud y)$. Fix an arbitrary $\lambda > 0$. From Taylor series, we see that
  \begin{equation*}
    \E \left[\exp \left( \lambda \sup_{t \in [0,T]} \Norm{Z(t,\cdot)}_{V_p} \right)\right]
    = \sum_{k=0}^\infty \frac{\lambda^k}{k!}\E\left[\sup_{t \in [0,T]} \Norm{Z(t,\cdot)}_{V_p}^k\right].
  \end{equation*}
  We claim that
  \begin{equation}\label{E:AppMom}
    \E \left[\exp \left( \lambda \sup_{t \in [0,T]} \Norm{Z(t,\cdot)}_{V_p} \right)\right]
    \leq \left(1 + T^{d/2}\right)T^{-d/2}\sum_{k=0}^\infty \frac{\lambda^k C^k M^k T^{k\alpha/2} k^{k/2}}{k!}.
  \end{equation}
  Indeed, the above inequality~\eqref{E:AppMom} follows from the moment
  estimates in part~(2) of Lemma~\ref{L:Iterate} for $k> \max\left(p,
  (d+2)/\alpha\right)$. If $k\le \max\left(p, (d+2)/\alpha\right)$, one can use
  the Jensen inequality and then pick the leading constant big enough. This
  proves the claim in~\eqref{E:AppMom}.

  In order to transform the summation in~\eqref{E:AppMom} into an exponential
  form, we apply Stirling's approximation $k! \sim \left(k/e\right)^k \sqrt{2
  \pi k}$ to see that
  \begin{equation*}
    \frac{k^{k/2}}{k!} \times \Gamma\left(k/2 + 1\right)
    \sim \frac{1}{\sqrt{2}} \left(\sqrt{e/2}\right)^{k},
  \end{equation*}
  which implies that for some universal constant $\Theta>0$,
  \begin{align*}
    \frac{k^{k/2}}{k!} \le \frac{\Theta^{k+1}}{\Gamma\left(k/2 + 1\right)},
    \quad \text{for all $k = 0,1,2,\cdots$}.
  \end{align*}
  Therefore, from~\eqref{E:AppMom}, we see that
  \begin{align*}
    \E \left[\exp \left( \lambda \sup_{t \in [0,T]} \Norm{Z(t,\cdot)}_{V_p} \right)\right]
    & \leq \left(1 + T^{-d/2}\right) \Theta \sum_{k=0}^\infty \frac{\Theta^kC^k\lambda^k M^k T^{k\alpha/2}}{\Gamma\left(k/2+1\right)} \notag \\
    & = \left(1 + T^{-d/2}\right) \Theta \exp\left(\Theta^2C^2\lambda^2 M^2 T^{\alpha}\right)\left[ 1+ \text{Erf}\left(\Theta C \lambda M T^\alpha\right)\right],
  \end{align*}
  where $\text{Erf}(\cdot)$ is the error function and the equality can be found,
  e.g., in Formula~7.2.6 in~\cite{olver.lozier.ea:10:nist}. Notice that
  $\text{Erf}\left(x\right)\le 1$. Hence, we have that
  \begin{align}\label{E:exp-moment}
    \E \left[\exp \left( \lambda \sup_{t \in [0,T]} \Norm{Z(t,\cdot)}_{V_p} \right)\right]
    \le 2 \Theta \left(1 + T^{-d/2}\right)  \exp\left(\Theta^2C^2\lambda^2 M^2 T^{\alpha}\right),
  \end{align}
  Finally, we can derive the exponential tail estimates by the Chebyshev
  inequality: for any $\lambda>0$,
  \begin{align*}
    \MoveEqLeft \Pro\left(\sup_{t \in [0,T]} \Norm{Z(t,\cdot)}_{V_p} > \delta \right)                                                                                                                        \\
    & = \Pro\left(\sup_{t \in [0,T]} \exp\left( \lambda \Norm{Z(t,\cdot)}_{V_p}\right) > \exp(\lambda \delta) \right)                                                                                        \\
    & \leq \exp \left(-\lambda \delta \right) \E \left[\exp\left( \lambda \sup_{t \in [0,T]} \Norm{Z(t,\cdot)}_{V_p} \right)\right]                                                                          \\
    & \leq 2\Theta\left(1 + T^{-d/2}\right) \exp\left( \Theta^2C^2\lambda^2 M^2 T^{\alpha}- \lambda \delta \right)                                                                                           \\
    & = 2\Theta\left(1 + T^{-d/2}\right) \exp\left( \Theta^2C^2 M^2 T^{\alpha} \left(\lambda - \frac{\delta}{2 \Theta^2 C^2 M^2 T^{\alpha}}\right)^2 - \frac{\delta^2}{4\Theta^2 C^2 M^2 T^{\alpha}} \right) \\
    & \leq 2\Theta\left(1 + T^{-d/2}\right) \exp\left( -\frac{\delta^2}{4\Theta^2 C^2 M^2 T^{\alpha}} \right),
  \end{align*}
  which proves Lemma~\ref{L:exponential-estimate}.
\end{proof}

In the next theorem, we generalize Theorem~1.6
of~\cite{chen.huang:23:superlinear} from the original Dalang
condition~\eqref{E:Dalang} to the weaker condition---Assumption~\eqref{A:noise}.
Part (1) of Theorem~\ref{T:Moment} originates from the moment formula
in~\cite{chen.huang:19:comparison}. Part (2) of Theorem~\ref{T:Moment} shows
that if the initial condition $u_0 \in V_p$ with $p\ge 1$, then for the
equation~\eqref{E:SHE} with both $b$ and $\sigma$ being globally Lipschitz and
vanishing at zero, i.e., $b(0) = \sigma(0)=0$, the solution $u(t,\cdot) \in V_p$
for any $t>0$, a.s.

\begin{theorem}[Moment formulas under Lipschitz condition]\label{T:Moment}
  Assuming Assumption~\ref{A:noise}, and that both $b$ and $\sigma$ are globally
  Lipschitz continuous with Lipschitz coefficients $L_b$ and $L_{\sigma}$,
  respectively. Then we have:
  \begin{enumerate}[(1)]
    \item For any $p\geq 2$,
      \begin{equation}
        \Norm{u(t,x)}_p \leq C (\tau + J_+(t,x))\exp\left(C t \max \left(p^{1/\alpha}L_{\sigma}^{2/\alpha}, L_b\right)\right)\,,
      \end{equation}
      where $J_+(t,x) : = (p_t* |u_0|)(x)$, $$ \tau : = \frac{|b(0)|}{L_b} \vee
      \frac{|\sigma(0)|}{L_{\sigma}}\,, $$ and the constant $C$ does not depend
      on $(t,x,p,L_b, L_{\sigma})$.
    \item If $u_0 \in L^{\infty}(\R^d)\cap L^p(\R^d)$ and assume that $\sigma(0)
      = b(0) = 0$, then for all $t>0$ and $p>\frac{2+d}{\alpha}$,
      \begin{align*}
        \Norm{\sup_{(s,x)\in [0,t]\times\R^d} u(s,x)}_p
        \leq \Norm{u_0}_{L^\infty} + C \Norm{u_0}_{L^p} \left(L_b+L_\sigma\right) \exp\left(Ct\max\left(p^{1/\alpha}L_{\sigma}^{2/\alpha}, L_b\right)\right),
      \end{align*}
      where the constant $C$ does not depend on $(t,x,p,L_b,L_\sigma)$.
  \end{enumerate}
\end{theorem}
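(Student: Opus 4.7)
My plan is to prove both parts via the mild formulation \eqref{E:Mild}, exploiting the fractional-kernel estimate
\begin{equation*}
  \int_{\R^d} e^{-s|\xi|^2} \widehat{f}(\xi) \ud\xi \le C\, s^{\alpha-1}
\end{equation*}
implied by Assumption~\ref{A:noise} and already used inside the proof of Lemma~\ref{L:Iterate} (see \eqref{E:quad-var}).

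Part (1) is essentially the moment bound of \cite{chen.huang:19:comparison} transplanted to the setting of Assumption~\ref{A:noise}. Taking $L^p(\Omega)$ norms in \eqref{E:Mild}, applying Minkowski's inequality to the drift and the sharp Burkholder--Davis--Gundy inequality (with constant $C\sqrt{p}$) to the stochastic integral, and using the linear-growth bounds $|b(u)| \le |b(0)| + L_b|u|$ and $|\sigma(u)| \le |\sigma(0)| + L_\sigma|u|$, one obtains a Volterra inequality of the form
\begin{equation*}
  H(t) \le C(\tau + \sup_y J_+(t,y))^2 + C L_b^2 \int_0^t H(s) \ud s + C p L_\sigma^2 \int_0^t (t-s)^{\alpha-1} H(s) \ud s,
\end{equation*}
with $H(t) \coloneqq \sup_y \Norm{u(t,y)}_p^2$. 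Solving by a fractional (Mittag-Leffler-type) Gronwall inequality and invoking the asymptotic $E_\alpha(x) \le C e^{x^{1/\alpha}}$ produces the exponent $\exp(Ct \max(p^{1/\alpha} L_\sigma^{2/\alpha}, L_b))$. I would cite \cite{chen.huang:19:comparison} for the detailed calculation and only verify that the argument carries over once \eqref{E:Dalang} is replaced by Assumption~\ref{A:noise}.

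Part (2) refines Part (1) by working directly in the Banach space $V_p$. Split $u = J_0 + I_b + I_\sigma$. Young's inequality for convolutions gives $\Norm{J_0(t,\cdot)}_{L^\infty} \le \Norm{u_0}_{L^\infty}$ and $\Norm{J_0(t,\cdot)}_{L^p} \le \Norm{u_0}_{L^p}$, which supplies the additive $\Norm{u_0}_{L^\infty}$ piece of the stated bound. Next, I would run a Picard iteration $u^{(n+1)}(t,x) = J_0(t,x) + I_b[u^{(n)}](t,x) + I_\sigma[u^{(n)}](t,x)$ starting from $u^{(0)} = J_0$. Because $b(0) = \sigma(0) = 0$, Lipschitz continuity gives $\Norm{b(v)}_{V_p} \le L_b \Norm{v}_{V_p}$ and $\Norm{\sigma(v)}_{V_p} \le L_\sigma \Norm{v}_{V_p}$, so Lemma~\ref{L:Iterate} (with $k = p$, which is permitted since $p > (2+d)/\alpha$) yields the recursion
\begin{equation*}
  m_{n+1}(T) \le \Norm{u_0}_{V_p} + \Bigl(T L_b + C p^{1/2} L_\sigma T^{\alpha/2 - d/(2p)} (1 + T^{d/2})^{1/p}\Bigr) m_n(T),
\end{equation*}
where $m_n(T) \coloneqq \Norm{\sup_{t \in [0,T]} \Norm{u^{(n)}(t,\cdot)}_{V_p}}_p$. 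Choosing $T = T_0$ small enough that the parenthesized factor is $\le 1/2$ yields $\sup_n m_n(T_0) \le 2\Norm{u_0}_{V_p}$; sending $n \to \infty$ and then gluing across $\lceil t/T_0\rceil$ successive subintervals produces the claimed exponential growth. The $(L_b + L_\sigma)$ prefactor and the $\Norm{u_0}_{L^p}$ factor emerge from the first Picard step $u^{(1)} - J_0 = I_b[J_0] + I_\sigma[J_0]$, whose $V_p$ norm is at most a constant multiple of $(T L_b + T^{\alpha/2} L_\sigma)\Norm{u_0}_{V_p}$, together with the pointwise separation $\sup_x J_0(t,x) \le \Norm{u_0}_{L^\infty}$.

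The main obstacle will be tracking the $p$-dependence precisely to recover the sharp $p^{1/\alpha}$ exponent, which arises from the interplay between the BDG factor $p^{1/2}$ and the fractional singularity $(t-s)^{\alpha-1}$: one must optimize $T_0 \sim (p L_\sigma^2)^{-1/\alpha} \wedge L_b^{-1}$ so that the total number of subintervals is $\lceil t/T_0 \rceil \sim t \max(p^{1/\alpha} L_\sigma^{2/\alpha}, L_b)$. An ancillary technical point is a priori finiteness of $m_n(T)$, which is immediate from $m_0(T) \le \Norm{u_0}_{V_p}$ and the contractive recursion on $[0, T_0]$.
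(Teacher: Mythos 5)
The paper proves both parts by citing Theorem~1.6(b)--(c) of \cite{chen.huang:23:superlinear} essentially verbatim; the only new content is verifying that Assumption~\ref{A:noise} still produces $\Upsilon(2\beta)\leq C\beta^{-\alpha}$, obtained from the subordination identity $\Upsilon(2\beta)=(2\pi)^{-d}\int_0^\infty e^{-2\beta s}\int_{\R^d}e^{-s|\xi|^2}\widehat f(\xi)\,\ud\xi\,\ud s$ split at $s=1/\beta$. That earlier paper already supplies the pointwise inequality $\Norm{u(t,x)}_p\leq\sqrt3\,J_+(t,x)\,H_{8pL_\sigma^2,L_b^2}(t;1)^{1/2}$, and $\limsup_{t\to\infty}\frac1t\log H_{a,b}(t;1)$ is bounded by $\inf\{\beta:a\Upsilon(2\beta)+b/(2\beta^2)<1/2\}\leq C\max(a^{1/\alpha},b^{1/2})$; the proof of part (2) is quoted unchanged. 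Your proposal is a genuinely different and more self-contained route---a Volterra inequality with fractional Gronwall for part (1), and a Picard iteration in $V_p$ with time-stepping for part (2)---but it has concrete gaps as written.

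For part (1): by collapsing to $H(t)=\sup_x\Norm{u(t,x)}_p^2$ you only recover the $\sup_x$ version of the bound, not the pointwise factor $J_+(t,x)$ that the theorem asserts; retaining the $x$-dependence requires carrying nested heat-kernel convolutions (the role the $H_{a,b}(t;1)$ kernel plays in \cite{chen.huang:23:superlinear}), which a scalar Gronwall inequality does not see. For part (2): Lemma~\ref{L:Iterate}(2) is stated under the hypothesis $\sup_{t\in[0,T]}\Norm{\Phi(t,\cdot)}_{V_p}\leq M$ \emph{almost surely}. A Picard iterate $\Phi=\sigma(u^{(n)})$ does not satisfy this---you only control its $L^p(\Omega)$-moment $m_n(T)$---so the lemma cannot be quoted for the recursion you write; you would first need to prove a moment-hypothesis variant of Lemma~\ref{L:Iterate}, which is plausible but a separate piece of work. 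Additionally, the lemma requires $k>\max\{(2+d)/\alpha,p\}$, so $k=p$ is strictly outside its scope (inspection of its proof shows $k\geq p$ suffices at the interpolation step, but the lemma as stated does not say so). The time-stepping and gluing idea is sound and does reproduce the correct exponent once these single-interval estimates are repaired.
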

\begin{proof}
  Comparing the proofs of parts~(b) and~(c) of Theorem~1.6
  of~\cite{chen.huang:23:superlinear}, we see that one only needs to prove
  part~(1) of the theorem, the proof of which follows a similar argument of that
  used in part (b) of Theorem~1.6 ({\it ibid.}). The proof of part~(2) remains
  unchanged from that of part~(c) of Theorem~1.6 ({\it ibid.}), and so it will
  not be repeated here. Proceeding now to part~(1), according to the proof of
  part~(b) of Theorem~1.6 ({\it ibid.}),
  \begin{align*}
    \Norm{u(t,x)}_p \leq
    \sqrt{3}J_+(t,x) H_{8pL_{\sigma}^2, L_b^2}(t; 1)^{1/2}\,,
  \end{align*}
  where the notation $H_{a,b}(t;1)$ is introduced in Section 2.2
  in~\cite{chen.huang:23:superlinear}. An upper bound of $H_{a,b}(t;1)$ is given
  in Lemma~2.1 in~\cite{chen.huang:23:superlinear}, i.e.,
  \begin{align*}
    \limsup_{t \to \infty}\frac{1}{t} \log H_{a,b}(t; 1)
    \leq & \inf\left\{\beta>0: a\Upsilon (2\beta) + \frac{b}{2\beta^2}< \frac{1}{2}\right\} \\
    \leq & \max\left( \inf \left\{\beta>0: a\Upsilon (2\beta) < \frac{1}{4}\right\}\,, \inf \left\{\beta>0: \frac{b}{2\beta^2} < \frac{1}{4}\right\}\right)\,.
  \end{align*}
  For the first argument in the above maximum, we want to find $\beta$ such that
  \begin{align*}
    \frac{1}{(2\pi)^d} \int_{\R^d} \frac{\widehat{f}(\xi)}{2\beta + |\xi|^2} \ud\xi < \frac{1}{4a}\,.
  \end{align*}
  Notice that
  \begin{align*}
         \MoveEqLeft\frac{1}{(2\pi)^d} \int_{\R^d} \frac{\widehat{f}(\xi)}{2\beta + |\xi|^2} \ud\xi
       = \frac{1}{(2\pi)^d} \int_0^{\infty} \int_{\R^d} e^{-2\beta s} e^{-s|\xi|^2} \widehat{f}(\xi) \ud\xi \ud s \\
     & = \frac{1}{(2\pi)^d} \int_0^{1/\beta} \int_{\R^d} e^{-2\beta s} e^{-s|\xi|^2} \widehat{f}(\xi) \ud\xi \ud s
       + \frac{1}{(2\pi)^d} \int_{1/\beta}^{\infty} \int_{\R^d} e^{-2\beta s} e^{-s|\xi|^2} \widehat{f}(\xi) \ud\xi \ud s \\
     &~\eqqcolon I_1 + I_2.
  \end{align*}
  According to Assumption~\ref{A:noise},
  \begin{gather*}
    I_1 \le C \int_0^{1/\beta} s^{\alpha-1} \ud s = \frac{C}{\beta^{\alpha}}\,, \intertext{and similarly,}
    I_2  =  \int_{\R^d} \frac{e^{-(2\beta+ |\xi|^2)\frac{1}{\beta}}}{2\beta+ |\xi|^2} \widehat{f}(\xi) \ud \xi
        \le \frac{1}{2\beta}\int_{\R^d} e^{-|\xi|^2 \times \frac{1}{\beta}} \widehat{f}(\xi) \ud \xi
        \le \frac{C}{\beta} \left(\frac{1}{\beta}\right)^{\alpha-1} = \frac{C}{\beta^\alpha}.
  \end{gather*}
  Therefore,
  \begin{align*}
    \max\left(
       \inf \left\{\beta>0: a\Upsilon (2\beta) < \frac{1}{4}\right\}\,,
       \inf \left\{\beta>0: \frac{b}{2\beta^2} < \frac{1}{4}\right\}
    \right)
    \leq C \max\left(a^{1/\alpha}, b^{1/2}\right)<\infty\,.
  \end{align*}
  Finally, replacing $a$ and $b$ by $8pL_\sigma^2$ and $L_b^2$, respectively,
  proves part~(1).
\end{proof}

\section{Proof of Theorem~\ref{T:Main}}\label{S:Main}

Now we are ready to prove the main result -- Theorem~\ref{T:Main}.

\begin{proof}[Proof of Theorem~\ref{T:Main}]
  The proof follows the same strategy as that in~\cite{salins:22:global*1}.
  First we define the cutoff functions for $b$ and $\sigma$:
  \begin{equation*}
    b_n(u) \coloneqq \begin{cases}
      b(-3^n) & \text{ if } u<-3^n      \\
      b(u)    & \text{ if } |u|\leq 3^n \\
      b(3^n)  & \text{ if } u>3^n
    \end{cases}
    \quad \text{and} \quad
    \sigma_n(u) \coloneqq \begin{cases}
      \sigma(-3^n) & \text{ if } u<-3^n      \\
      \sigma(u)    & \text{ if } |u|\leq 3^n \\
      \sigma(3^n)  & \text{ if } u>3^n
    \end{cases},\quad \text{respectively.}
  \end{equation*}
  Since both $b_n(\cdot)$ and $\sigma_n(\cdot)$ are globally Lipschitz
  continuous, by part~(2) of Theorem~\ref{T:Moment}, for $p\ge (2+d) /\alpha$,
  there is a unique {$V_p$--valued} solution solving
  \begin{align*}
    u_n(t,x) = & \int_{\mathbb{R}^d} p_t(x-y)u_0(y)\ud y + \int_0^t\int_{\mathbb{R}^d} p_{t-s}(x-y)b_n(u_n(s,y))\ud y \ud s \\
               & + \int_0^t \int_{\mathbb{R}^d} p_{t-s}(x-y)\sigma_n(u_n(s,y)) W(\ud s,\ud y).
  \end{align*}
  Denote the following sequence of stopping times
  \begin{align}\label{E:stopping-times}
    \tau_n \coloneqq \inf\left\{t>0: \: \Norm{u_n\left(t,\cdot\right)}_{V_p}>3^n\right\}.
  \end{align}

  It is easy to check that the solutions are \textit{consistent} in the sense
  that $u_n(t,x) = u_m(t,x)$ for all $t< \tau_n$ whenever $n<m$. We can define a
  \textit{local mild solution} to~\eqref{E:SHE} by setting
  \begin{equation*}
    u(t,x)\coloneqq u_n(t,x) \quad \text{when $t<\tau_n$}.
  \end{equation*}
  This local mild solution will exist until the explosion time
  $\tau_\infty\coloneqq \sup_n \tau_n$. A local solution is called a
  \textit{global solution} if $\tau_\infty = \infty$ with probability one.
  \bigskip

  We build the deterministic sequence
  \begin{equation}\label{E:an-def}
    a_n \coloneqq \min\left\{\frac{\Theta 3^{n+1}}{h\left(3^{n+1}\right)},\frac{1}{n}\right\},
  \end{equation}
  with the constant $\Theta\in {(0,1/3)}$ to be determined later. Just like
  in~\cite{salins:22:global*1}, the Osgood condition $\int_1^\infty
  \frac{1}{h(u)}du = +\infty$ guarantees that
  \begin{equation*}
    \sum_{n = 1}^\infty a_n = +\infty.
  \end{equation*}

  Our goal is to show that the tripling times are bounded below by this
  deterministic sequence $\tau_{n+1} - \tau_n \geq a_n$ for all large $n$, which
  implies that there is a global solution. To this end, we derive the following
  moment estimates. \bigskip

  \noindent\textbf{Claim:~} There exist constants $C>0$ and $q>1$, both
  independent of $n$, such that
  \begin{equation}\label{E:Stopping}
    \Pro \left(\tau_{n+1}-\tau_n < a_n\right) \leq C n^{-q},
    \quad \text{for all $n\in \mathbb{N}$.}
  \end{equation}

  Indeed, as mentioned previously, each $\tau_n$ is well-defined and the
  solution $u(\tau_n,\cdot) \in V_p$. Therefore, we can restart the process at
  time $\tau_n$. For all $t>0$ and $x\in\R^d$, define
  \begin{align*}
    U_n(t,x) & \coloneqq \int_{\mathbb{R}^d} p_t(x-y) u(\tau_n,y)\ud y,                                                              \\
    I_n(t,x) & \coloneqq \int_0^t \int_{\mathbb{R}^d} p_{t-s}(x-y) b(u(\tau_n + s,y))1_{\{s \in [0,\tau_{n+1}-\tau_n]\}}\ud y \ud s, \\
    Z_n(t,x) & \coloneqq \int_0^t \int_{\mathbb{R}^d} p_{t-s}(x-y) \sigma(u(\tau_n+s,y)) 1_{\{s \in [0,\tau_{n+1}-\tau_n]\}} W\left(( \tau_n + \ud s),\ud y\right).
  \end{align*}
  Then for all $t \in [0, \tau_{n+1} - \tau_n]$,
  \begin{align}
    u(\tau_n + t, x) = U_n(t,x) + I_n(t,x)  + Z_n(t,x)
  \end{align}
  Furthermore, the presence of the indicator function $1_{\{s \in
  [0,\tau_{n+1}-\tau_n]\}}$ in the definitions of $I_n(t,x)$ and $Z_n(t,x)$
  guarantees that the integrands are bounded in $V_p$--norm.

  Because $p_t(\cdot)$ is a probability density, it follows from Young's
  inequality for convolutions that for any $t>0$,
  \begin{align}
    \Norm{U_n(t,\cdot)}_{V_p} \leq \Norm{u(\tau_n,\cdot)}_{V_p} = 3^n.
  \end{align}
  Because of the definition of the stopping time $\tau_n$
  in~\eqref{E:stopping-times} and Lemma~\ref{L:Stable}, we see that
  \begin{equation}
    \Norm{b(u(\tau_n + s,y))1_{\{s \in [0,\tau_{n+1}-\tau_n]\}}}_{V_p} \leq h(3^{n+1}).
  \end{equation}
  Therefore, Lemma~\ref{L:Iterate} with $M=h(3^{n+1})$ guarantees that for $t
  \in [0, \tau_{n+1}-\tau_n]$,
  \begin{equation}
    \Norm{I_n(t,\cdot)}_{V_p} \leq t h(3^{n+1}).
  \end{equation}
  In particular, if $t \in [0, a_{n}\wedge (\tau_{n+1} - \tau_n)]$, then by the
  definition of $a_n$ in~\eqref{E:an-def}, we have that
  \begin{equation}
    |I_n(t,\cdot)| \leq a_{n } h(3^{n+1}) \leq 3^n.
  \end{equation}

  The event $\{\tau_{n+1} - \tau_n < {a_{n}}\}$ can only occur if
  $\Norm{u(\tau_n + t,\cdot)}_{V_p} > 3^{n+1}$ for some $t\in(0, a_{n})$. But
  because $\Norm{U_n(t,\cdot)}_{V_p}$ and $\Norm{I_n(t,\cdot)}_{V_p}$ are each
  less than $3^n$ if $t \in [0,{a_{n}}\wedge (\tau_{n+1}- \tau_n)]$, the
  $V_p$--norm can only triple in this short amount of time if the stochastic term
  satisfies
  \begin{align*}
   \sup_{t \in [0,a_{n+1} \wedge (\tau_{n+1} - \tau_n)]}\Norm{Z_n(t,\cdot)}_{V_p} > 3^n.
  \end{align*}
  Because of the definition of the stopping time $\tau_n$,
 ~\eqref{E:stopping-times} and Lemma~\ref{L:Stable}, the $V$--norm of the
  integrand of the stochastic integral is bounded with probability one by
  \begin{equation}
    \left\|\sigma(u(\tau_n+s,y)) 1_{\{s \in [0,\tau_{n+1}-\tau_n]\}} \right\|_{V_p}\leq 3^{(n+1)(1-\alpha/2)}\left( h(3^{n+1})\right)^{\alpha/2} \left(\log \left(\frac{h(3^{n+1})}{3^{n+1}} \right) \right)^{-1/2}
  \end{equation}
  Using the exponential estimate~\eqref{E:exponential-estimate} with
  \begin{align*}
     T = a_n, \quad \delta = 3^n, \quad \text{and} \quad
     M = 3^{(n+1)(1-\alpha /2)} \left[h\left(3^{n+1}\right)\right]^{\alpha /2} \left(\log\left(\frac{h(3^{n+1})}{3^{n+1}}\right)\right)^{-1/2},
  \end{align*}
  we have that
  \begin{align*}
     \Pro\left(\tau_{n+1} - \tau_n < a_n\right)
     & \leq \Pro \left(\sup_{t \in [0,a_n]} \Norm{Z_n(t,\cdot)}_{V_p} \geq 3^n \right)                                                                                                              \\
     & \leq C \left(1+a_n^{-d/2}\right) \exp\left(-\frac{C3^{2(n+1)}\log\left(\frac{h(3^{n+1})}{3^{n+1}} \right)}{3^{2(n+1)} \left(\frac{h(3^{n+1})}{3^{n+1}}\right)^\alpha a_n^\alpha} \right) \\
     & \leq C \left(1+a_n^{-d/2}\right) \exp\left(-C \Theta^{-\alpha}\left\lvert\log\left(a_n/\Theta\right)\right\rvert \right)                                                                 \\
     & \leq Ca_n^{C\Theta^{-\alpha} -d/2}\,.
  \end{align*}
  The second-to-last inequality in the above display is a consequence of the
  definition of $a_n$ and~\eqref{E:an-def}, which guarantees that $a_n\leq
  \frac{\Theta 3^{n+1}}{h(3^{n+1})}$. Now set $q := C \Theta^{-\alpha} -d/2$ and
  choose $\Theta \in (0,1/3)$ small enough so that $q>1$. Then we obtain that
  \begin{equation}
    \Pro\left(\tau_{n+1} - \tau_n < a_n\right) \leq C a_n^{q}.
  \end{equation}
  From the definition of $a_n$ in~\eqref{E:an-def}, we also know that $a_n\le
  1/n$. Therefore,
  \begin{equation}
    \Pro\left(\tau_{n+1} - \tau_n < a_{n}\right) \leq C n^{-q}.
  \end{equation}
  This proves the claim in~\eqref{E:Stopping}. \bigskip

  Finally, we can prove the main result. From the claim in~\eqref{E:Stopping},
  \begin{equation}
    \sum_{n = 1}^\infty \Pro\left(\tau_{n+1} - \tau_n < a_{n}\right)
    \leq C \sum_{n = 1}^\infty n^{-q}
    <+\infty.
  \end{equation}
  By the Borel-Cantelli Lemma, with probability one there exists a random
  $N(\omega)$ such that for all $n \geq N(\omega)$, $\tau_{n+1} - \tau_n \geq
  a_{n}$. Because $\sum a_{n} = +\infty$ this implies that
  \begin{equation}
    \Pro\left(\sup_n \tau_n = +\infty\right) = 1
  \end{equation}
  proving that the solutions cannot explode in finite time. This completes the
  proof of Theorem~\ref{T:Main}.
\end{proof}

\section{An explosion example---the proof of Theorem~\ref{T:Ex-Blowup}}\label{S:Example}

\begin{proof}[Proof of Theorem~\ref{T:Ex-Blowup}]
  We will prove this theorem via contradiction. Fix an arbitrary $p\ge 2$. We assume that
  the conclusion is false, namely, for all $u_0\in V_p$,
  \begin{align}\label{E:Contradiction-Ass}
    \Norm{u(t,\cdot)}_{V_p} <\infty, \quad \text{a.s. for all $t>0$,}
  \end{align}
  and seek a contradiction. For this purpose, it suffices to consider
  the initial condition of the following form, where $p_1(x)$ is the heat kernel from \eqref{E:Mild},
  \begin{align}\label{E:u_0-Theta}
    u_0(x) = \Theta\: p_1(x), \quad \text{for some $\Theta>0$.}
  \end{align}
  It is clear that $u_0\in V_p$. The proof consists of the following two steps.
  \bigskip

  \noindent\textbf{Step~1.~} In this step, we claim that
  under~\eqref{E:Contradiction-Ass}, there exists $\Theta_0>0$ such that
  \begin{align}\label{E:Claim_1}
    \Pro\left( Y_{1/2} \geq 2L \right) >0, \quad \text{for all $L>0$ and $\Theta \ge \Theta_0$}.
  \end{align}
  Indeed, let $t\in(0,1)$. Multiply $p_{1-t}(x)$ on both sides
  of~\eqref{E:Mild} and integrate over $x$ to obtain
  \begin{gather}\label{E:YDM}
    Y_t = Y_0 + D_t + M_t \quad \text{a.s.~for all $t\in(0,1]$,} \shortintertext{where} \notag
    \begin{split}
      Y_t & \coloneqq \int_{\R^d} u(t,x) p_{1-t}(x) \ud x \quad \text{with} \quad Y_0 = (p_1*u_0)(0) = \Theta \left(4\pi\right)^{-d/2}, \\
      D_t & \coloneqq \int_0^t\int_{\R^d} p_{1-s}(y) b\left(u(s,y)\right) \ud s \ud y, \quad \text{and}                                 \\
      M_t & \coloneqq \int_0^t\int_{\R^d} p_{1-s}(y) \sigma\left(u(s,y)\right) W(\ud s, \ud y).
    \end{split}
  \end{gather}
  The boundedness assumption on $\sigma$ ensures that $M_t$ is a martingale.
  Assumption in~\eqref{E:Contradiction-Ass} guarantees that $Y_t$ is well
  defined since
  \begin{align}\label{E:Y-Finite}
    0 \le Y_t\le \Norm{u(t,\cdot)}_{L^\infty} \le \Norm{u(t,\cdot)}_{V_p}<\infty, \quad \text{a.s. for all $t>0$.}
  \end{align}
  Note that the nonnegativity of $Y_t$ comes from the comparison principle
  (see~\cite{gei.manthey:94:comparison, chen.huang:19:comparison}  and references therein), which requires
  conditions such as $\sigma(0)=0$ and $b(0)=0$. However, we will show that
  $X_t\coloneqq \E(Y_t)$ will blow up in at $t=1/2$ provided that $\Theta$ is
  large enough, which then implies the claim in~\eqref{E:Claim_1}.

  It remains to show the blow-up of $X_t$. By treating $p_{1-s}(y)\ud yd\Pro$ as
  a probability measure on $\mathbb{R}^d \times \Omega$, we can apply Jensen's
  inequality to see that
  \begin{align*}
    \E\left(D_t\right)
    \ge \int_0^t\ud s\:  b\left(\E\left[\int_{\R^d}\ud y \: p_{1-s}(y) u(s,y)\right]\right)
     =  \int_0^t\ud s\:  b\left(\E\left[Y_s \right]\right),
  \end{align*}
  from which we obtain the following integral inequality
  \begin{align*}
    X_t \ge \Theta (4\pi)^{-d/2} + \int_0^t b(X_s)\ud s.
  \end{align*}
  Hence, by the finite Osgood condition~\eqref{E:Osgood-Fnt}, for some $Y_0>0$,
  $X_t$ blows up in finite time. By increasing the value of $\Theta$ whenever
  necessary, one can ensure that $X_{1/2} = \infty$. This completes the proof of
  the claim in~\eqref{E:Claim_1}. \bigskip

  \noindent\textbf{Step~2.~} Notice that $Y_t$ in~\eqref{E:YDM} can be
  equivalently written as
  \begin{gather}\label{E:YDM_2}
    Y_t = Y_{1/2} + D_t^* + M_t^* \quad \text{a.s.~for all $t\in(1/2,1]$,}
  \end{gather}
  where the initial condition $Y_{1/2}$ is finite a.s. thanks
  to~\eqref{E:Y-Finite},
  \begin{gather*}
    D_t^* \coloneqq \int_{1/2}^t\int_{\R^d} p_{1-s}(y) b\left(u(s,y)\right) \ud s \ud y  \quad \text{and} \quad
    M_t^* \coloneqq \int_{1/2}^t\int_{\R^d} p_{1-s}(y) \sigma\left(u(s,y)\right) W(\ud s, \ud y).
  \end{gather*}

  \noindent\textbf{Step~2-1.~} As in Step~1, the boundedness assumption on
  $\sigma$ guarantees that $\left\{M_t^*:\:t\ge1/2\right\}$ is a martingale. We
  claim that
  \begin{gather}\label{E:Claim_2}
    \Pro \left( \inf_{t \in [1/2, 1]} M_t^* \leq  -L \Bigg| \mathcal{F}_{1/2}\right)
    \leq \exp\left( -\frac{L^2}{2 C_f \Norm{\sigma}_{L^\infty}^2} \right), \quad \text{a.s.~for all $L>0$,} \shortintertext{where}
    C_f \coloneqq \int_{1/2}^1 \ud s \iint_{\mathbb{R}^{2d}}\ud y_1\ud y_2\: p_{1-s}(y_1)p_{1-s}(y_2) f(y_1-y_2) <\infty.
  \end{gather}
  First note that Assumption~\ref{A:noise} guarantees the finiteness of the
  constant $C_f$. Let $\lambda>0$ be some constant to be chosen later. Since
  $\left\{\exp\left( -\lambda M_t^* \right):\: t\ge 1/2\right\} $ is a
  submartingale, by Doob's submartingale inequality, we see that
  \begin{align}\label{E:Max-Doob}
    &     \Pro\left( \inf_{t \in [1/2,1]} M_t^* \leq -L \Bigg| \mathcal{F}_{1/2}\right)
       =  \Pro\left(\sup_{t \in [1/2,1]  } e^{-\lambda M_t^*} > e^{\lambda L} \Bigg| \mathcal{F}_{1/2}\right) \notag
      \le e^{-\lambda L} \E \left[e^{-\lambda M_1^*} \Bigg| \mathcal{F}_{1/2}\right] \notag\\
    &  =  \E\Bigg(\exp\bigg\{ -\lambda L + \frac{\lambda^2}{2} \int_{1/2}^1\ud s \iint_{\mathbb{R}^{2d}}\ud y_1\ud y_2\: \notag \\
    & \quad\quad\quad  \times p_{1-s}(y_1)p_{1-s}(y_2)\sigma(u(s,y_1))\sigma\left(u(s,y_2)\right)f(y_1-y_2) \bigg\} \bigg| \mathcal{F}_{1/2}\Bigg)\notag\\
    & \le \exp\left( -\lambda L + \frac{1}{2} C_f \lambda^2 \Norm{\sigma}_{L^\infty}^2 \right), \quad \text{a.s.,}
  \end{align}
  where in the last inequality we have used the fact that $f$ is nonnegative.
  Optimizing the constant $\lambda$ in~\eqref{E:Max-Doob} proves the claim
  in~\eqref{E:Claim_2}. \bigskip

  \noindent\textbf{Step~2-2.~} Consider the following deterministic equation,
  \begin{align*}
    \widehat{Y}_t = L + \int_{1/2}^t b\left(\widehat{Y}_s\right) \ud s, \quad \text{for $t\ge 1/2$.}
  \end{align*}
  Since $b(\cdot)$ satisfies the finite Osgood condition~\eqref{E:Osgood-Fnt},
  when $L>0$ is large enough, the solution to the above equation will explode before
  time $1$. In the following, we fix this constant $L$.

  Next for all $t \in [1/2,1]$, another application of Jensen's inequality with
  respect to the measure $p_{1-s}(y)\ud y$ to in the term $D_t^*$
  in~\eqref{E:YDM_2} shows that
  \begin{gather*}
    Y_t \geq  \left(Y_{1/2}+ M_t^*\right) + \int_{1/2}^t b\left(Y_s\right)\ud s, \quad \text{a.s.~for all $t\in(1/2,1]$.}
  \end{gather*}
  Choose and fix arbitrary constant $\Theta >\Theta_0$. We claim that
  \begin{align*}
    \Pro(\Omega_L) > 0 \quad \text{with} \quad
    \Omega_L \coloneqq  \left\{Y_{1/2} + M_t^* \ge L: \: \text{for all $t \in [1/2,1]$}\right\}.
  \end{align*}
  Indeed, by the claims in~\eqref{E:Claim_1} and~\eqref{E:Claim_2}, we see that
  \begin{align*}
    \Pro(\Omega_L)
    & \ge \Pro\left( \left\{ Y_{1/2}\ge 2L \right\} \cap \left\{ \inf_{t\in [1/2,1]} M_t^* > -L\right\} \right)   \\
    & =   \Pro\left( Y_{1/2}\ge 2L \right) \Pro\left( \inf_{t\in [1/2,1]} M_t^* > -L \bigg| Y_{1/2}\ge 2L \right) \\
    & \ge \Pro\left( Y_{1/2}\ge 2L \right) \left(1 - \exp\left( -\frac{L^2}{2 C_f \Norm{\sigma}_{L^\infty}^2} \right)\right)
      >   0.
  \end{align*}
  Therefore,
  \begin{align*}
    Y_t \geq L + \int_{1/2}^t b(Y_s)\ud s, \quad \text{a.s. on $\Omega_L$ for all $t \in [1/2,1]$.}
  \end{align*}
  Since $b(\cdot)$ is nondecreasing, we see that $\widehat{Y}_t$ provides a
  sub-solution to $Y_t$ in the sense that $Y_t\ge \widehat{Y}_t$ a.s.~on
  $\Omega_L$ for all $t\in[1/2,1]$. Hence, with positive probability, i.e.,
  a.s.~on $\Omega_L$ with $\Pro(\Omega_L)>0$, $Y_1\ge \widehat{Y}_1 = \infty$,
  which contradicts with~\eqref{E:Y-Finite}. This completes the proof of
  Theorem~\ref{T:Ex-Blowup}.
\end{proof}

\appendix
\section{Appendix}\label{S:BDG}

In this appendix, we give a result about the Burkholder-Davis-Gundy inequality
for the martingales taking values in a Banach space (typically $L^p(\R^d)$ in
our setting). We begin by introducing some standard concepts, which can be found
in, e.g., \cite{ondrejat:04:uniqueness} or Section~2.2
of~\cite{zhang:10:stochastic}.

\begin{definition}[Definition~3.1 of~\cite{ondrejat:04:uniqueness}]\label{D:2-smooth}
  A Banach space $X$ is said to be \textit{2-smooth} provided there exist an
  equivalent norm $\Norm{\cdot}$ and a constant $C\geq 2$ such that for all $x,
  y \in X$,
  \begin{equation*}
    \|x+y\|^2 + \|x-y\|^2 \leq 2 \|x\|^2 + C \|y\|^2\,.
  \end{equation*}
\end{definition}

\begin{definition}[Definition~2.3 of~\cite{ondrejat:04:uniqueness}]
  Let $H$ be a separable Hilbert space and $X$ a separable Banach space, $B\in
  L(H,X)$ be an operator from $H$ to $X$, and $\xi_i$, $i = 1,2,3,\dots$, be a
  sequence of independent standard Gaussian random variables, and $\left\{e_k:\:
  k=1,2,3,\dots\right\}$ be one orthonormal basis of $H$. Then $B$ is called a
  \textit{radonifying} operator if the series $\sum_{k=1}^{\infty} B(e_k)\xi_k$
  converges in $L^2(\Omega, X)$. The space of radonifying operators are denoted
  by $\gamma\left(H,X\right)$, which is a Banach space with the
  \textit{radonifying norm}
  \begin{equation*}
    \Norm{B}_{\gamma} \coloneqq \left(\E \left[\left\|\sum_{k=1}^{\infty}B(e_k)\xi_k\right\|^2_X\right]\right)^{1/2}\,.
  \end{equation*}
\end{definition}

We have the following two facts (see, e.g., Example~2.9
of~\cite{zhang:10:stochastic} for details):
\begin{enumerate}
  \item The Banach space $X = L^p(\R^d)$ for all $p\in [2,\infty)$ is 2-smooth
    and separable;
    % Thus Theorem \ref{T:Banach-BDG} applies;
  \item For any $B\in \gamma\left(H, L^p\left(\R^d\right)\right)$, it holds that
    \begin{equation}\label{E:Radonfiying-Lp}
      \Norm{B}_{\gamma} \leq C_p \Norm{\:\Norm{B}_H}_{L^p}.
    \end{equation}
\end{enumerate}

The following result about BDG inequality for martingales with values in Banach
space, is from Theorem~1.1 in~\cite{seidler:10:exponential}.

\begin{theorem}\label{T:Banach-BDG}
  Let $X$ be a 2-smooth and separable Banach space with norm $\|\cdot \|_X$, $W$
  be a cylindrical $Q$-Wiener process ($Q$ is the covariance operator) on a real
  separable Hilbert space $H$ and $U = \mathop{range} \left(Q^{1/2}\right)$.
  Then, there exists a constant $\Pi<\infty$, depending only on $\left(X,
  \Norm{\cdot}_X\right)$, such that
  \begin{equation}\label{E:General-B-BDG}
    \Norm{\sup_{0\leq t \leq \tau}\Norm{\int_0^t \psi(s)\ud W(s)}_X}_{k}
    \leq \Pi \sqrt{k} \Norm{\left(\int_0^{\tau} \|\psi(s)\|^2_{\gamma} \ud s \right)^{1/2}}_{k}\,,
    \quad \text{for all $k>2$},
  \end{equation}
  where $\tau$ is a stopping time and $\psi$ is any progressively measurable
  $\gamma(U, X)$-valued stochastic process satisfying
  \begin{align*}
    \int_0^t \|\psi(s)\|^2_{\gamma} \ud s  < \infty \quad \text{for all} \ t\geq 0\ a.s.
  \end{align*}
\end{theorem}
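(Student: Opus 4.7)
The plan is to combine the 2-smoothness of $X$ with an It\^o-type inequality for Banach-valued stochastic integrals, and then invoke a moment-iteration argument to obtain the sharp $\sqrt{k}$ constant.

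First, I would use the 2-smoothness hypothesis (Definition~\ref{D:2-smooth}) to establish the key analytic inequality: after replacing $\Norm{\cdot}_X$ with the equivalent norm supplied by that definition, the function $\phi(x) = \Norm{x}_X^2$ is twice Fr\'echet-differentiable on $X \setminus \{0\}$ with uniformly bounded second derivative. Equivalently, $\phi$ satisfies
\begin{equation*}
  \phi(x+y) \le \phi(x) + \InPrd{D\phi(x),\, y} + C \Norm{y}_X^2, \quad \forall x,y \in X,
\end{equation*}
where $C$ depends only on $(X, \Norm{\cdot}_X)$. This inequality is the Banach-space analog of the polarization identity that underlies the classical It\^o formula for $\Norm{M_t}^2$ in Hilbert space.

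Second, for the martingale $M_t = \int_0^t \psi(s)\,\ud W(s)$ with $\psi$ progressively measurable and $\gamma(U,X)$-valued, I would apply the above quadratic inequality to derive an It\^o-type bound of the form
\begin{equation*}
  \phi(M_t) \le N_t + C \int_0^t \Norm{\psi(s)}_\gamma^2 \ud s,
\end{equation*}
where $N_t$ is a local martingale coming from the linear-in-$y$ term, namely an It\^o integral against $D\phi(M_s)\psi(s)$. Rigorous justification proceeds by approximating $\psi$ by simple processes (discrete version of the inequality above), passing to the limit in $\gamma(U,X)$-norm, and localizing by stopping times; the quadratic-variation term is controlled by the very definition of the radonifying norm $\Norm{\cdot}_\gamma$. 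Taking expectations and applying Doob's maximal inequality to $N_t$ then yields the case $k=2$ of~\eqref{E:General-B-BDG}.

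Third, for general $k>2$, I would apply the same machinery to the nonlinear functional $\Norm{M_t}_X^k = \phi(M_t)^{k/2}$, whose second differential introduces explicit $k$-dependent combinatorial factors. By tracking these factors carefully, combining with H\"older's inequality in time, and iterating on $k$ (or equivalently invoking the Burkholder--Gundy inequality specialized to martingale type~2 spaces), one extracts the $\sqrt{k}$-scaling of the constant $\Pi\sqrt{k}$ in~\eqref{E:General-B-BDG}. The main obstacle is precisely this sharp $\sqrt{k}$ dependence rather than a linear-in-$k$ constant: a naive iteration of the $k=2$ argument produces a factor of order $k$, and the improvement to $\sqrt{k}$ relies on a delicate interpolation/cancellation characteristic of 2-smooth spaces, which constitutes the essential content of the result.
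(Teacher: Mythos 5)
The paper does not prove this theorem: it is quoted verbatim (with citation) from Theorem~1.1 of Seidler's 2010 paper on exponential estimates for stochastic convolutions in 2-smooth Banach spaces. So the only thing to assess is whether your blind sketch would reconstruct that argument, and there is a genuine gap at precisely the place you flag as ``the essential content.''

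Your first two steps are sound and standard. Replacing $\Norm{\cdot}_X$ by the Pisier equivalent norm makes $\phi(x)=\Norm{x}_X^2$ twice Fr\'echet differentiable with bounded, Lipschitz second derivative, and the It\^o-type inequality
\begin{equation*}
  \phi(M_t) \le N_t + C\int_0^t \Norm{\psi(s)}_\gamma^2\,\ud s
\end{equation*}
does follow from that (via simple processes, localization, and the identification of the radonifying norm with the trace of the second-order term). Doob plus Gronwall then gives the $L^2$ estimate. This is the classical Neidhardt/Brze\'zniak route and there is no issue here.

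The problem is step three. Applying the same device to $\phi(M_t)^{k/2}$ and ``tracking the $k$-dependent combinatorial factors'' does \emph{not} produce a constant of order $\sqrt{k}$; the second differential of $x\mapsto\Norm{x}_X^k$ carries a factor of order $k^2$ in front of the quadratic form, and after taking $k$-th roots the naive bookkeeping delivers $\Pi(k)\sim k$, not $\Pi(k)\sim\sqrt{k}$. This is exactly the same phenomenon already visible for real-valued continuous martingales: the It\^o-formula-on-$|M_t|^k$ proof of BDG does not give the optimal constant, and the $\sqrt{k}$ growth (Carlen--Kr\'ee, Barlow--Yor, Davis) requires a separate device --- a good-$\lambda$ inequality, or the Gaussian/time-change comparison, or the sharp martingale transform constant. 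Seidler's proof in the 2-smooth setting hinges on importing such a device; it is not a consequence of differentiating $\phi^{k/2}$ and being careful. Since your plan explicitly defers the $\sqrt{k}$ extraction to ``delicate interpolation/cancellation'' without identifying what that cancellation is, the proposal as written does not prove the stated inequality, only the weaker one with $\Pi\, k$ in place of $\Pi\sqrt{k}$ --- and in the paper it is precisely the $\sqrt{k}$ growth, fed into Stirling's formula in Lemma~\ref{L:exponential-estimate}, that produces the Gaussian tail $e^{-C\delta^2 M^{-2}T^{-\alpha}}$; a constant of order $k$ there would only give an exponential (not sub-Gaussian) tail and would not support the stopping-time argument in the proof of Theorem~\ref{T:Main}.
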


Since the Walsh integral can be written using the setup of $H$-valued process
and the cylindrical Wiener process on $H$, we can apply
Theorem~\ref{T:Banach-BDG} with $X= L^p(\R^d)$, $p\ge 2$, and
combine~\eqref{E:Radonfiying-Lp} and~\eqref{E:General-B-BDG} to get the
following lemma:
\begin{lemma}\label{L:Banach-BDG}
  Let $p\ge 2$ be fixed and $H$ be the Hilbert space introduced
  in~\eqref{E:H-inner-Prod}. Assume that $\psi: \Omega \times \R_+\times \R^d
  \times \R^d\to \R$ be an adapted and jointly measurable random field such that
  \begin{enumerate}
    \item for each $(s,x)\in\R_+\times\R^d$, $\psi(s,\cdot, x) \in H$;
    \item for each $s>0$, $\Norm{\psi(s,\cdot,\circ)}_H \in L^p(\R^d)$.
  \end{enumerate}
  Then, for all $k>2$ and $t>0$, it holds that
  \begin{gather}\label{E:Banach-BDG}
    \Norm{\Norm{\int_0^t \int_{\R^d}\psi(s,y,\circ)W(\ud s, \ud y)}_{L^p}}_k
    \leq C \sqrt{k} \Norm{\left(\int_0^t \Norm{\Norm{\psi(s,\cdot, \circ)}_H}^2_{L^p} \ud s\right)^{1/2}}_{k}\,,
  \end{gather}
  where the constant $C$ does not depend on $k$. Note that
  inequality~\eqref{E:Banach-BDG} is nontrivial only when the right-hand side of
  the inequality is finite.
\end{lemma}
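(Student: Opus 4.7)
The plan is to recognize the Walsh integral as a Banach-valued stochastic integral with respect to a cylindrical Wiener process on $H$, then apply the abstract BDG inequality of Theorem~\ref{T:Banach-BDG} to the space $X = L^p(\R^d)$, and finally control the radonifying norm of the operator-valued integrand by the quantity appearing on the right-hand side of~\eqref{E:Banach-BDG} using~\eqref{E:Radonfiying-Lp}. The key conceptual step is the reinterpretation of the integrand: for each $(\omega,s)$, the random field $\psi(s,\cdot,\circ)$ is viewed as a linear operator $\Psi(s):H\to L^p(\R^d)$ by setting $(\Psi(s)g)(x) := \langle \psi(s,\cdot,x),g\rangle_H$ for $g \in H$, so that the Walsh integral $\int_0^t\int_{\R^d}\psi(s,y,\circ)W(\ud s,\ud y)$ coincides (as an $L^p$-valued random variable) with $\int_0^t \Psi(s)\ud W(s)$, where $W$ is now regarded as a cylindrical Wiener process on $H$.

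First, I would verify the standing hypotheses required to apply Theorem~\ref{T:Banach-BDG}: the Banach space $X = L^p(\R^d)$ is 2-smooth and separable for $p\ge 2$ (stated in fact~(1) right before the lemma), and assumption~(1) on $\psi$ guarantees that $\Psi(s)$ is well-defined as an element of $L(H, L^p(\R^d))$ for a.e.~$(\omega,s)$. Then assumption~(2) combined with~\eqref{E:Radonfiying-Lp} shows
\[
\Norm{\Psi(s)}_{\gamma}\le C_p \Norm{\Norm{\psi(s,\cdot,\circ)}_H}_{L^p}<\infty,
\]
so $\Psi(s) \in \gamma(H, L^p(\R^d))$. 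The progressive measurability of $\Psi(\cdot)$ as a $\gamma(H, L^p(\R^d))$-valued process follows from the joint measurability and adaptedness of $\psi$.

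Next, I would apply Theorem~\ref{T:Banach-BDG} with $\tau = t$ to obtain
\[
\Norm{\Norm{\int_0^t\Psi(s)\ud W(s)}_{L^p}}_k \le \Pi\sqrt{k}\,\Norm{\left(\int_0^t \Norm{\Psi(s)}_{\gamma}^2\ud s\right)^{1/2}}_k.
\]
Substituting the radonifying-norm bound above and absorbing the universal constants $\Pi$ and $C_p$ into a single constant $C$ yields~\eqref{E:Banach-BDG}.

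The main obstacle will be the identification between the Walsh integral (defined via the space-time martingale measure framework) and the Hilbert-space-driven stochastic integral $\int_0^t \Psi(s)\ud W(s)$ appearing on the left of~\eqref{E:General-B-BDG}. This identification is standard — one approximates $\psi$ by simple (tensor-type) integrands, checks agreement of the two integrals on such elementary processes, and extends by isometry using that both constructions share the Itô isometry governed by the inner product on $H$ given in~\eqref{E:H-inner-Prod} — but it must be spelled out (or cited, e.g.~from \cite{dalang.khoshnevisan.ea:09:minicourse} or the reference given in \cite{zhang:10:stochastic}) to justify transporting the abstract BDG estimate into the Walsh-integral formulation used throughout the paper.
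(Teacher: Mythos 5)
Your proposal follows the same route as the paper: view the Walsh integrand as a $\gamma(H,L^p)$-valued process, apply the abstract BDG inequality of Theorem~\ref{T:Banach-BDG} with $X=L^p(\R^d)$, and bound the radonifying norm via~\eqref{E:Radonfiying-Lp}. The paper records this only as a one-paragraph remark preceding the lemma, so your version is simply a fuller account of the same argument, correctly flagging the identification of the Walsh integral with the cylindrical-Wiener-process integral as the one step that deserves a citation.
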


\section*{Acknowledgments}

L.~C. is partially supported by the NSF grant DMS-2246850 and by a collaboration
grant (\#959981) from the Simons foundation. M.S. is partially supported by a collaboration grant from the Simon's foundation (\#962543).

% \begin{thebibliography}{999}
% \end{thebibliography}
\printbibliography[title={References}]
\end{document}